\newcommand\eps{\varepsilon}
\renewcommand\phi{\varphi}
\newcommand\esp[1]{\mathbb{E}\left[#1\right]}
\newtheorem{thm}{Theorem}[section]
\newtheorem{prop}[thm]{Proposition}
\newtheorem{lem}[thm]{Lemma}
\newtheorem{rem}[thm]{Remark}
\newtheorem{defi}[thm]{Definition}
\newtheorem{ex}[thm]{Example}
\newcommand\n{\mathbb{N}}
\renewcommand\r{\mathbb{R}}
\renewcommand\ll{\left}
\newcommand\rr{\right}
\renewcommand\P{\mathcal{P}}
\newcommand\un{\mathds{1}}
\newcommand\uno[1]{\un_{\left\{#1\right\}}}
\begin{document}

\begin{frontmatter}

\title{The first Fundamental Theorem of Calculus for functions defined on Wasserstein space}

\runtitle{Fundamental theorem of calculus for measure-variable functions}

\begin{aug}

\author{\fnms{Xavier} \snm{Erny}\ead[label=e4]{xavier.erny@telecom-sudparis.eu}}

\address{SAMOVAR, T\'el\'ecom SudParis, Institut Polytechnique de Paris, 91120 Palaiseau, France}

\runauthor{X. Erny}
\end{aug}

\begin{abstract}
We study the relationship between different notions of differentiability for functions defined on Wasserstein space. We establish in particular an analogue of the first fundamental theorem of calculus in this setting. Precisely, we show that if a function on Wasserstein space is sufficiently regular in the sense of the linear functional derivative, then its integral is differentiable and the derivative coincides with the integrand.
Our approach relies on a general differentiability criterion that connects the linear functional derivative, viewed as a Fr\'echet derivative, and Dawson’s weaker notion, which corresponds to a Gateaux derivative. Under suitable regularity assumptions, it is possible to upgrade Gateaux differentiability to Fr\'echet differentiability in the infinite-dimensional setting of Wasserstein space.
\end{abstract}

 \begin{keyword}[class=MSC]
 	\kwd{46G05}%Derivatives of functions in infinite-dimensional spaces
 	\kwd{46T20}%Continuous and differentiable maps in nonlinear functional analysis
 	\kwd{28A33}%Spaces of measures, convergence of measures
 \end{keyword}

\begin{keyword}
	\kwd{Measure-variable function}
	\kwd{Linear functional differentiability}
	\kwd{Wasserstein space}
\end{keyword}

\end{frontmatter}

\section*{Introduction}

We study a version of the first fundamental theorem of calculus for measure-variable functions. Roughly speaking, if a measure-variable function is sufficiently regular, then one can define an antiderivative of this function by integrating it. An additional assumption, specific to this framework, turns out to be necessary. In all the paper, we consider functions~$F$ defined on Wasserstein space of order one: the space of probability measures with a finite first order moment, endowed with Wasserstein metric of order one. Understanding the analytical structure of functions defined on Wasserstein space is a natural question, in particular when comparing different notions of differentiability in this infinite-dimensional setting.

One of the first reference about the analysis of measure-variable functions is \cite{dawson_measure-valued_1993}. The definition of differentiability that Dawson has introduced is close to the following statement (that is adapted to our framework): a function~$F$ is differentiable at some~$m_0$ w.r.t. some~$x\in\r$ if
\begin{equation}\label{eq:dawson}
\frac{1}{\eps}\left[F\left((1-\eps)m+\eps \delta_x\right) - F(m)\right]
\end{equation}
admits a finite limit as~$\eps$ vanishes. Compared to \cite[p. 19]{dawson_measure-valued_1993}, the weight $1-\eps$ has been added to~$m$ in~\eqref{eq:dawson} to consider only probability measures. This definition can be seen as a partial derivative of~$F$ at the vector~$m$ w.r.t. the direction~$\delta_x$, meaning as a Gateaux-derivative.

At least two other notions of derivatives for measure-variable functions exist: one introduce by Pierre-Louis Lions (see \cite{cardaliaguet_notes_2013}) and one in \cite{carmona_probabilistic_2018}. Both are closely related and applied in similar contexts: mean-field games (\cite{cardaliaguet_notes_2013}), optimal transport (\cite{carmona_probabilistic_2018}), martingale problem and Ito's formula (\cite{mischler_new_2015}, \cite{guo_itos_2023}, \cite{cox_controlled_2024}, \cite{erny_generators_2025}). Note that some papers have also studied analytical properties of measure-variable functions: Taylor's formulas (\cite{chassagneux_weak_2022}), measure-variable polynomials (\cite{cuchiero_probability_2019}), regularity of the stochastic flow of measure-valued processes (\cite{crisan_smoothing_2018}). The study of differentiability notions on Wasserstein space arises naturally in several frameworks, but remains essentially analytical in nature. %In this paper, we prove two analytical results in the framework of measure-variable functions:
 In this paper, we address these questions by proving two analytical results: the first fundamental theorem of calculus and a criterion when Gateaux-differentiability implies Fr\'echet-differentiability. The second result is used to prove the first one.

In order to state our results formally, we use the notion {\it linear functional differentiability} introduced by \cite{carmona_probabilistic_2018} (see Definition~5.43). There are two ways to characterize a differentiable function related to this notion. The first one corresponds to a Fr\'echet-differentiability: $F$ is differentiable at~$m_0$ if there exists $\delta F(m_0,\bullet)$ (which is the derivative of~$F$ at $m_0$) such that for all~$m$,
\begin{equation}\label{eq:deriv1}
F(m) - F(m_0) = \int_\r \delta F(m_0,x)\,d(m-m_0)(x) + o\left(W(m_0,m)\right),
\end{equation}
with $W$ some Wasserstein metric. 
{Notice that~\eqref{eq:deriv1} can be written as
$$F(m) - F(m_0) = \left\langle \delta F(m_0,\bullet) , m - m_0\right\rangle + o(W(m,m_0)),$$
hence the function~$\delta F$ corresponds to a Fr\'echet-derivative, where the function~$\delta F(m_0,\bullet)$ can be seen as the gradient of~$F$ at~$m_0$.}

The other characterization is the following: $F$ is differentiable if  there exists $\delta F(m_0,\bullet)$ such that for all~$m_0,m$,
\begin{equation}\label{eq:deriv2}
F(m) - F(m_0) = \int_0^1\int_\r \delta F ((1-t)m_0 + t\,m,x)\,d(m-m_0)(x)\, dt.
\end{equation}

Both characterizations above are equivalent for sufficiently regular functions: if for all~$m_0,m$,~\eqref{eq:deriv1} holds true, then for all~$m_0,m$,~\eqref{eq:deriv2} also holds true (see e.g. Proposition~2.19 of \cite{erny_generators_2025}). This statement can be seen as a version of the second fundamental theorem of calculus (i.e. the increasing of a differentiable function is the integral of its derivative). Conversely, if for all~$m_0,m$,~\eqref{eq:deriv2} is true, then for all~$m_0,m$,~\eqref{eq:deriv1} is true assuming existence and enough regularity of the quantity $\partial_x \delta F(m,x)$ (see e.g. Proposition~5.44 of \cite{carmona_probabilistic_2018}, whose proof is given for Wasserstein space of order two, but can be adapted to Wasserstein space of order one). At first sight, this second statement can be compared to the first fundamental theorem of calculus (i.e. the integral of some (sufficiently regular) function is differentiable, the derivative being the integrand), with one important detail: the second characterization of the differentiability of~$F$ above is ``for all~$m_0$ and all~$m$, \eqref{eq:deriv2} holds true'', whereas a consistent version of the first fundamental would rather be ``if there exists~$m_0$ such that for all $m$, \eqref{eq:deriv2} holds true, then $F$ is differentiable with derivative at~$m_0$ being $\delta F(m_0,\bullet)$''. This is our main result (Theorem~\ref{thm:TFA}): in other words, if \eqref{eq:deriv2} holds for every~$m$ and a particular~$m_0$, then it also holds for every~$m$ and any~$m_0$. In the statement of this result, there are two main assumptions: one about the regularity of the integrand at~\eqref{eq:deriv2} which is standard, and an unexpected one which is specific to the framework of measure-variable functions and necessary (see Remark~\ref{rem:iiidecept} for more explanation about it).

A second result, that is actually used to prove the first one, corresponds to the classical statements ``if the partial derivatives of some function exist and are sufficiently regular, then the function is differentiable'' for our framework (i.e. with measure-variable functions). Roughly speaking, let $\Delta F(m,x)$ be the limit of~\eqref{eq:dawson} as $\eps$ vanishes (assuming it exists and is finite), if $(m,x)\mapsto \partial_x\Delta F(m,x)$ exists and is jointly continuous, then $F$ is differentiable in the sense given at~\eqref{eq:deriv1} with $\delta F(m,x) = \Delta F(m,x)$ for all~$m$ and~$x$ (see Theorem~\ref{prop:criteria} for the precise statement). Note that this property would be obvious in a finite-dimensional setting, but we work on the space of probability measures. Our proof relies on an approximation scheme of probability measures by compactly supported purely atomic measures introduced in the Section~4.1 of \cite{cox_controlled_2024}. This scheme allows to focus the study on a space of purely atomic measures with fixed atoms in finite number, hence, on this space, two measures differs only through the weights of a finite number of atoms: following this remark, the differentiability in term of~\eqref{eq:deriv1} or~\eqref{eq:deriv2} follows from Dawson-differentiability~\eqref{eq:dawson}.

{\bf Organization of the paper.} The next section defines notation that are used throughout the paper. In Section~\ref{sec:def}, we introduce some definitions about the regularity of measure-variable functions and state our two main results. Sections~\ref{sec:proofThmcri} and~\ref{sec:proofTFA} are devoted to the proofs of our main results, respectively Theorems~\ref{prop:criteria} and~\ref{thm:TFA}. In Appendix~\ref{ref:counter}, we provide a counter-example for Theorem~\ref{thm:TFA} when the unusual assumption is not satisfied, and in Appendix~\ref{append:syme}, we prove a lemma justifying formally this assumption in general. Finally in Appendix~\ref{append:proofstech}, we provide the proof of a technical lemma.

\section*{Notation}

\begin{itemize}
\item $\r_+$ is the set of non-negative real numbers.
\item $\mathcal{P}_1(\r)$ is the space of probability measures on~$\r$ with a finite first order moment. It is always endowed with Wasserstein metric of order one, denoted by~$W_1$.
\item The metric $W_1$ is defined as follows: for~$m,\mu\in\P_1(\r),$
$$W_1(m,\mu) := \underset{(X,Y)}{\inf}\esp{|X-Y|},$$
where the infinum ranges over all the pairs of random variables~$(X,Y)$ with respective marginal laws~$m,\mu$.
\item Alternatively, $W_1$ can be defined by Kantorovich-Rubinstein's duality: for~$m,\mu\in\P_1(\r),$
$$W_1(m,\mu) = \underset{f\in \textrm{Lip}_1}{\sup} \int_\r f(x)\,d(m-\mu)(x),$$
with $\textrm{Lip}_1$ the set of $1$-Lipschitz continuous functions~$f:\r\to\r$. Notice that it is possible to restrict the set $\textrm{Lip}_1$ by imposing in addition $f(0)=0$ (possibly by considering $f-f(0)$). In this paper, we mostly use this second characterization of~$W_1$.
\item For $K>0,$ let us denote, by a slight notation abuse, $\P([-K,K])$ the subspace of $\P_1(\r)$ composed of the $[-K,K]$-supported probability measures.
\item For $d\in\n^*$, and $x=(x_1,...,x_d)\in\r^d$, we denote
$$||x||_1 := \sum_{k=1}^d |x_k|\textrm{ and }||x||_\infty = \underset{1\leq k\leq d}{\max}|x_k|,$$
without emphasizing the dependency w.r.t. the dimension~$d$.
\item For $d\in\n^*$, $x=(x_1,...,x_d)\in\r^d$ and $1\leq k\leq d$, we denote
$$x\backslash_k := (x_1,...,x_{k-1},x_{k+1},...,x_d)\in\r^{d-1}.$$
\item For a function $G:(m,x)\in\P_1(\r)\times\r\mapsto G(m,x)$, and $x\in\r$, let us denote
$$G_x:m\in\P_1(\r)\longmapsto G_x(m) := G(m,x).$$
\end{itemize}

\section{Definitions and main results}\label{sec:def}

\subsection{Differentiability of measure-variable functions}

Let us introduce formally the notion of differentiability we use in the paper. This corresponds to what is called ``linear functional differentiability'' as introduced in the Definition~5.43 of \cite{carmona_probabilistic_2018}.

\begin{defi}\label{def:deriv2}
A function~$F:\P_1(\r)\rightarrow\r$ is said to be differentiable if there exists a function~$H:\P_1(\r)\times\r\rightarrow\r$ that is measurable and sublinear w.r.t. the real-variable uniformly w.r.t. the measure-variable belonging to any given compact of $\P_1(\r)$, such that, for all~$m,m_0\in \P_1(\r)$,
\begin{equation}\label{derivtildeH}
F(m) - F(m_0) = \int_0^1\int_\r H ((1-t)m_0 + t\,m,x)\,d(m-m_0)(x)\,\, dt,
\end{equation}
The function $H$ is called a version of the derivative of~$F$. The {\it canonical derivative} of~$F$ at~$m_0$ is defined as the only version verifying, for all~$\mu\in \P_1(\r)$,
\begin{equation}\label{eq:cano}
\int_\r H(\mu,x)\,d\mu(x)=0,
\end{equation}
and is denoted in all the paper by, for all~$\mu\in \P_1(\r),x\in\r$,
$$\delta F(\mu,x) = H(\mu,x).$$
\end{defi}

The proof of our main results require some additional regularity for measure-variable functions. Let us introduce the notion of $C^1$ functions (defined on $\P_1(\r)$) and $C^{0,1}$ functions (defined on $\P_1(\r)\times\r$). 
\begin{defi}
A function~$G:(m,x)\in\P_1(\r)\times\r\mapsto G(m,x)$ is $C^{0,1}$ if it is $C^1$ w.r.t.~$x$ for fixed~$m$, continuous w.r.t.~$m$ for fixed~$x$, sublinear w.r.t.~$x$ uniformly w.r.t.~$m$ belonging to any compact, and if the function $(m,x)\in \P_1(\r)\times \r \mapsto \partial_x G(m,x)$ is jointly continuous.

A function~$F:\P_1(\r)\rightarrow\r$  is $C^1$ if it is differentiable, and if $\delta F$ is $C^{0,1}$.
\end{defi}

In the definition of the class $C^{0,1}$ above, it is wiser to enforce the joint continuity on the function $(m,x)\mapsto\partial_x \delta F(m,x)$ rather than on $(m,x)\mapsto\delta F(m,x)$, since the first one is independent of our ``canonical condition''~\eqref{eq:cano}. So this definition can be used easily in other settings even when~\eqref{eq:cano} is not in force.

Note that the joint continuity condition will be used throughout the paper to bound locally the corresponding functions. As an example, the following  lemma will be useful in our proofs.
\begin{lem}\label{lem:UC}
Let $G:\P_1(\r)\times\r\to\r$ be $C^{0,1}$ and $K>0$. Then $G$ is uniformly continuous w.r.t. the measure-variable~$m$ uniformly w.r.t. the real-variable~$x$ on the set~$\P([-K,K])\times[-K,K]$. 
\end{lem}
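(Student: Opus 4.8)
The plan is to upgrade the separate continuity hypotheses contained in the definition of $C^{0,1}$ to genuine \emph{joint} continuity of $G$ on the compact set $\P([-K,K])\times[-K,K]$, and then to invoke the classical fact that a continuous function on a compact metric space is uniformly continuous. First I would record that $\P([-K,K])$ is compact for the metric $W_1$: probability measures supported in the fixed compact $[-K,K]$ form a tight, hence weakly compact, family, and on such a family weak convergence coincides with $W_1$-convergence (the integrands in the Kantorovich--Rubinstein dual are Lipschitz, hence bounded on $[-K,K]$). Consequently $\P([-K,K])\times[-K,K]$ is a compact metric space.

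Next I would exploit the joint continuity of $\partial_x G$. Since $\partial_x G$ is continuous on the compact set $\P([-K,K])\times[-K,K]$, it is bounded there, say $|\partial_x G(m,y)|\le M$ for all $(m,y)\in\P([-K,K])\times[-K,K]$. Writing, for $m\in\P([-K,K])$ and $x,x'\in[-K,K]$,
$$G(m,x)-G(m,x')=\int_{x'}^{x}\partial_x G(m,y)\,dy,$$
I obtain $|G(m,x)-G(m,x')|\le M\,|x-x'|$; that is, $G$ is $M$-Lipschitz in the real variable, \emph{uniformly} in $m\in\P([-K,K])$. Combining this uniform Lipschitz modulus in $x$ with the hypothesis that $G$ is continuous in $m$ for each fixed $x$ yields joint continuity: for a sequence $(m_n,x_n)\to(m,x)$ in $\P([-K,K])\times[-K,K]$, I split
$$|G(m_n,x_n)-G(m,x)|\le |G(m_n,x_n)-G(m_n,x)|+|G(m_n,x)-G(m,x)|,$$
bounding the first term by $M\,|x_n-x|\to 0$ and sending the second to $0$ by continuity in $m$ at the fixed point $x$.

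Finally, joint continuity on the compact metric space $\P([-K,K])\times[-K,K]$ implies joint uniform continuity, and specializing the two real-variable arguments to be equal gives precisely uniform continuity in~$m$, uniformly in~$x$, as required. I expect the main obstacle to be the passage from separate to joint continuity, which fails for general separately continuous functions and here rests crucially on the uniform-in-$m$ modulus of continuity in $x$ furnished by the boundedness of $\partial_x G$ on the compact product; the compactness of $\P([-K,K])$ for $W_1$ is the other essential ingredient that makes the compactness argument applicable in this infinite-dimensional setting.
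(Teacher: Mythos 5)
Your proof is correct, but it is organized differently from the paper's. The paper never establishes joint continuity of $G$ itself: it applies Heine--Cantor to $\partial_x G$ and to $G_0:m\mapsto G(m,0)$ on the compact set $\P([-K,K])\times[-K,K]$, and then transfers their moduli of continuity to $G$ through the representation $G(m,x)=G(m,0)+\int_0^x\partial_y G(m,y)\,dy$, obtaining the explicit quantitative bound
$$\left|G(m,x)-G(\mu,x)\right|\leq \omega_{G_0}\left(W_1(m,\mu)\right)+2K\,\omega\left(W_1(m,\mu)\right),$$
i.e.\ it compares the two measure arguments directly, at the level of the derivative. You instead compare the real arguments first: from mere \emph{boundedness} of $\partial_x G$ on the compact product you get a Lipschitz bound in $x$ uniform in $m$, upgrade the separate continuity in $m$ to joint continuity of $G$, and only then invoke Heine--Cantor, for $G$ itself, before specializing the two real arguments to be equal. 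Both routes rest on the same two pillars (compactness of $\P([-K,K])$ for $W_1$ and the fundamental theorem of calculus in $x$), but your argument uses strictly less information about $\partial_x G$ in the key estimate (boundedness rather than its uniform continuity in the measure variable), at the price of being purely qualitative; the paper's version produces a concrete modulus of continuity for $G$ in terms of those of $G_0$ and $\partial_x G$, which is the kind of quantitative control that the rest of the paper manipulates (compare the moduli $\omega_{\Delta F}$, $\omega_{\partial F}$ in the proof of Lemma~\ref{lem:diffDn}). Your observation that the separate-to-joint upgrade is the crux, and that it works here only because of the uniform-in-$m$ Lipschitz modulus in $x$, is accurate.
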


\begin{proof}
Since $(m,x)\mapsto \partial_x G(m,x)$ is jointly continuous, it is uniformly continuous on the compact set $\P([-K,K])\times[-K,K]$ (for the compactness of $\P([-K,K])$ in Wasserstein topology, see Proposition~5.3 and Lemma~5.7 of \cite{cardaliaguet_notes_2013}), and the same hold for the function $G_0:m\mapsto G(m,0)$.

And, since, for all~$m\in\P_1(\r)$ and~$x\in\r,$
$$G(m,x) = G(m,0) + \int_0^x \partial_y G(m,y)\,dy,$$
we have, for all~$m,\mu\in\P([-K,K])$ and~$x\in[-K,K]$
\begin{align*}
\left|G(m,x) - G(\mu,x)\right|\leq& |G(m,0) - G(\mu,0)| + \int_{-|x|}^{|x|} |\partial_y G(m,y) - \partial_y G(\mu,y)|dy\\
\leq& \omega_{G_0}(W_1(m,\mu)) + 2|x| \omega(W_1(m,\mu)) \leq \omega_{G_0}(W_1(m,\mu)) + 2 K \omega(W_1(m,\mu)) ,
\end{align*}
with $\omega_{G_0}$ the continuity modulus of $G_0$ restricted to $\P([-K,K])$, and $\omega$ the continuity modulus $(m,x)\mapsto \partial_x G(m,x)$ w.r.t. the measure-variable, restricted to $\P([-K,K])\times [-K,K]$.
\end{proof}

In addition, the class $C^{0,1}$ is used to obtain a practical uniform integrability criterion with the following lemma.
\begin{lem}\label{lem:UI}
Let $G:\P_1(\r)\times\r\rightarrow\r$ be $C^{0,1}$ and $\mathcal{K}$ be a compact set of $\P_1(\r)$. Then the set of functions
$$\mathcal{F}:=\{x\in\r\mapsto G(m,x): m\in\mathcal{K}\}$$
is uniformly integrable w.r.t. any $\mu\in\P_1(\r)$.
\end{lem}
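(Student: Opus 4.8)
The plan is to reduce the statement to the classical domination criterion for uniform integrability. Recall that $\mathcal{F}$ is uniformly integrable with respect to $\mu$ precisely when
$$\underset{m\in\mathcal{K}}{\sup}\int_\r |G(m,x)|\,\uno{|G(m,x)|\geq R}\,d\mu(x)\xrightarrow[R\to\infty]{}0.$$
The key observation is that all the functions in $\mathcal{F}$ are controlled by a single $\mu$-integrable function, obtained from the sublinear growth of $G$ in the real-variable together with the finite first moment of $\mu$. Once such a dominating function is exhibited, uniform integrability follows from a textbook argument, so the whole proof amounts to extracting the right uniform bound.

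First I would use the $C^{0,1}$ regularity of $G$, specifically the sublinearity with respect to~$x$ uniform over $m$ in any compact. Applied to the compact $\mathcal{K}$, this yields a constant $C_{\mathcal{K}}>0$ such that $|G(m,x)|\leq C_{\mathcal{K}}(1+|x|)$ for all $m\in\mathcal{K}$ and all $x\in\r$. (Should one read sublinearity in the stronger sense $|G(m,x)|=o(|x|)$, the same affine bound is recovered by combining the growth estimate for large $|x|$ with the uniform boundedness of $G$ on $\mathcal{K}\times[-K,K]$ furnished by Lemma~\ref{lem:UC} and compactness of $\mathcal{K}$.) Setting $g(x):=C_{\mathcal{K}}(1+|x|)$, the hypothesis $\mu\in\P_1(\r)$ means exactly that $\mu$ has a finite first moment, so that $g$ is $\mu$-integrable.

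To conclude, since $|G(m,x)|\leq g(x)$ for every $m\in\mathcal{K}$, the inclusion $\{|G(m,\cdot)|\geq R\}\subseteq\{g\geq R\}$ gives, for all $m\in\mathcal{K}$,
$$\int_\r |G(m,x)|\,\uno{|G(m,x)|\geq R}\,d\mu(x)\leq \int_\r g(x)\,\uno{g(x)\geq R}\,d\mu(x).$$
The right-hand side no longer depends on~$m$, and it tends to $0$ as $R\to\infty$ by dominated convergence, the $\mu$-integrability of~$g$ ensuring that the tail contribution vanishes. Taking the supremum over $m\in\mathcal{K}$ on the left-hand side then yields the desired uniform integrability.

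I do not expect any genuine obstacle here: the argument is entirely standard once the uniform affine bound is in place. The only point requiring a little care is the very first step, namely converting the ``sublinear uniformly over compacts'' clause of the $C^{0,1}$ definition into a concrete estimate $|G(m,x)|\leq C_{\mathcal{K}}(1+|x|)$ valid simultaneously for all $m\in\mathcal{K}$; everything downstream is the classical fact that a family dominated by a fixed integrable function is uniformly integrable.
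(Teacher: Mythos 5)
Your proposal is correct and follows essentially the same route as the paper: extract the uniform affine bound $|G(m,x)|\leq C(1+|x|)$ for $m\in\mathcal{K}$ from the $C^{0,1}$ sublinearity clause (the paper additionally invokes continuity of $m\mapsto G(m,0)$ on the compact $\mathcal{K}$ to pin down the constant), then conclude by domination and the dominated convergence theorem. The inclusion $\{|G(m,\cdot)|\geq R\}\subseteq\{C(1+|x|)\geq R\}$ you use is exactly the mechanism behind the paper's final estimate.
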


\begin{proof}
Since $m\mapsto G(m,0)$ is continuous, it is bounded on~$\mathcal{K}$. So, there exists~$C>0$ such that, for all $f\in\mathcal{F}$ and $x\in\r,$ $|f(x)|\leq C(1+|x|).$

Whence for any~$K>0$ and $f\in\mathcal{F}$,
$$\int_\r |f(x)|\uno{|f(x)|>K}d\mu(x) \leq C\left(\int_\r \uno{C(1+|x|)>K}d\mu(x) +  \int_\r |x|\uno{C(1+|x|)>K}d\mu(x)\right),$$
which vanishes as $K$ goes to infinity, by the dominated convergence theorem.
\end{proof}

An older notion of measure-variable derivative has been defined by Dawson in \cite[p. 19]{dawson_measure-valued_1993}. As explained in the introduction, the derivative $\delta F$ corresponds to a notion of Fr\'echet-derivative, while the derivative introduced by Dawson (denoted by $\Delta F$ in this paper) corresponds to a notion of Gateaux-derivative. Formally, Dawson has considered the limit of $[F(m_0 + \eps \delta_x) - F(m_0)]/\eps$ as $\eps$ vanishes. Since we are interested in applications where the measures are all probability measures, Definition~\ref{def:derivD} below is more natural for us, and does not require to define a ``canonical continuation'' of functions defined on~$\P_1(\r)$.

\begin{defi}\label{def:derivD}
A function $F:\P_1(\r)\rightarrow\r$ is said to be Dawson-differentiable at~$m_0\in \P_1(\r)$ w.r.t.~$x\in\r$ if
$$\frac1{\eps}\left[F((1-\eps)m_0 + \eps \delta_x) - F(m_0)\right]$$
converges as $\eps>0$ vanishes. In that case, let us denote by $\Delta F(m_0,x)$ this limit. 
\end{defi}

Note that, the Dawson-differentiability is weaker than the differentiability in the sense of Definition~\ref{def:deriv2} according to the following lemma.

\begin{lem}\label{lem:lineardawson}
Every $C^1$ function~$F$ is Dawson-differentiable at any~$m\in \P_1(\r)$ w.r.t. all~$x\in\r$, with
$$\Delta F(m,x) = \delta F(m,x).$$
\end{lem}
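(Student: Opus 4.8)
The plan is to insert the measure $m_\eps := (1-\eps)m + \eps\,\delta_x$ directly into the defining identity \eqref{derivtildeH} and then let $\eps$ vanish. Taking the base point to be $m$ and the second argument to be $m_\eps$ in \eqref{derivtildeH}, and noting that $m_\eps - m = \eps(\delta_x - m)$ while $(1-t)m + t\,m_\eps = (1-t\eps)m + t\eps\,\delta_x$, the factor $\eps$ can be pulled out of the $d(m_\eps-m)$ integration. Splitting the integral against $d(\delta_x - m)$ into a point evaluation at $x$ minus an integral against $m$, the Dawson difference quotient becomes
\begin{equation*}
\frac1\eps\left[F(m_\eps) - F(m)\right] = \int_0^1 \left[\delta F\bigl((1-t\eps)m + t\eps\,\delta_x, x\bigr) - \int_\r \delta F\bigl((1-t\eps)m + t\eps\,\delta_x, y\bigr)\,dm(y)\right]dt,
\end{equation*}
where I have used that $\delta F$ is itself a version of the derivative.

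First I would establish the pointwise-in-$t$ convergence of the bracketed integrand as $\eps\to 0^+$. For fixed $t$ one has $W_1\bigl((1-t\eps)m + t\eps\,\delta_x,\,m\bigr)\to 0$ (it is bounded by $t\eps\,(|x| + \int_\r |y|\,dm(y))$ via Kantorovich--Rubinstein duality), so the continuity of $\delta F$ in the measure-variable for fixed real-variable, which is part of the $C^{0,1}$ hypothesis, yields $\delta F\bigl((1-t\eps)m + t\eps\,\delta_x, x\bigr)\to \delta F(m,x)$. For the inner integral, the linear growth bound $|\delta F(\mu,y)|\leq C(1+|y|)$, valid uniformly over $\mu$ in a compact set (as derived in the proof of Lemma~\ref{lem:UI}), furnishes an $m$-integrable dominating function, so dominated convergence gives $\int_\r \delta F\bigl((1-t\eps)m + t\eps\,\delta_x, y\bigr)\,dm(y)\to \int_\r \delta F(m,y)\,dm(y)$.

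Next I would pass the outer limit inside $\int_0^1(\cdots)\,dt$. Since $\{(1-s)m + s\,\delta_x : s\in[0,\eps_0]\}$ is the continuous image of a compact interval, hence compact in $\P_1(\r)$, the same linear growth bound dominates the bracketed integrand by a constant independent of $(t,\eps)$; dominated convergence on $[0,1]$ then applies, and the limit equals $\delta F(m,x) - \int_\r \delta F(m,y)\,dm(y)$. Finally, invoking the canonical normalization \eqref{eq:cano} at $\mu = m$ annihilates the second term, leaving $\Delta F(m,x) = \delta F(m,x)$, which is exactly the claim.

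The main obstacle I expect is the interchange of limit and integral: both the inner integration over the non-compact space $\r$, which requires the linear growth control to produce a dominating function, and the outer integration over $[0,1]$, which requires a bound that is uniform over the compact family of interpolating measures. Once these growth and compactness facts are in place, the identification of the Dawson derivative with $\delta F$ is immediate from the canonical condition.
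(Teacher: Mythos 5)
Your proof is correct and follows essentially the same route as the paper's: the same substitution of $(1-\eps)m+\eps\,\delta_x$ into \eqref{derivtildeH} to factor out $\eps$, followed by passing the limit $\eps\to 0$ inside the integrals and invoking the canonical condition \eqref{eq:cano}. The only cosmetic difference is that you justify the interchange by dominated convergence with the explicit linear-growth bound, whereas the paper invokes Vitali's convergence theorem via Lemma~\ref{lem:UI}, whose proof rests on that very same bound.
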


\begin{proof}
The proof consists in noticing that, by the differentiability of~$F$ at~$m$,
\begin{align*}
\frac1\eps\left(F((1-\eps)m + \eps \delta_x) - F(m)\right) =& \int_0^1\int_\r \delta F((1-t)m + t(1-\eps)m+t\eps m,y)\, d(\delta_x - m)(y)dt\\
=& \int_0^1\int_\r \delta F((1-t\eps)m + t\eps \delta_x,y)d(\delta_x-m)(y)dt.
\end{align*}

Then, by Vitali's convergence theorem (using Lemma~\ref{lem:UI}), it is possible to make $\eps$ goes to zero to prove that $F$ is Dawson-differentiable at~$m$ with, for $x\in\r$,
$$\Delta F(m,x)= \int_0^1\int_\r \delta F(m,y)~d(\delta_x-m)(y)\,dt = \delta F(m,x),$$
which ends the proof.
\end{proof}

In particular, if $F$ is $C^1$, then, for all~$m\in\P_1(\r)$,
\begin{equation}\label{eq:dawson0}
\int_\r \Delta F(m,x)\, dm(x) = \int_\r \delta F(m,x)\, dm(x) = 0.
\end{equation}

Let us recall that our main result corresponds to a version of the ``first fundamental theorem of calculus'' for measure-variable functions. More precisely, we state that if a function $H:\P_1(\r)\times\r\rightarrow\r$ is regular enough, then it can be written as $H = \delta F$ for an explicit $F:\P_1(\r)\to\r$. In the classical statement of the fundamental theorem of calculus (i.e. for real-variable functions), the continuity of the integrand~$H$ would be sufficient. However, in our framework, we need also $H$ to be differentiable with enough regularity w.r.t. its parameters. So we introduce the class~$C^{1,1}$.
\begin{defi}
A function~$G:(m,x)\in\P_1(\r)\times\r\mapsto G(m,x)$ is $C^{1,1}$ if it is $C^1$ w.r.t.~$x$ for fixed~$m$ and $C^1$ w.r.t.~$m$ for fixed~$x$, and if the functions
\begin{align*}
(m,x)\in \P_1(\r)\times \r& \longmapsto \partial_x G(m,x),\\
(m,x,y)\in\P_1(\r)\times\r^2&\longmapsto \partial_x \delta G_x(m,y),\\
(m,x,y)\in\P_1(\r)\times\r^2&\longmapsto \partial_y \delta G_x(m,y)
\end{align*}
are jointly continuous.
%
%A function~$F:\P_1(\r)\rightarrow\r$  is $C^2$ (resp. $C^2_b$) if it is $C^1$, and if $\delta F$ is $C^{1,1}$ (resp. $C^{1,1}_b$).
\end{defi}

Let us state a lemma similar as Lemma~\ref{lem:UC} above, for the class~$C^{1,1}.$
\begin{lem}\label{lem:UC2}
Let $G:\P_1(\r)\times\r\to\r$ be $C^{1,1}$ and $K>0$. Then the function
$$(m,x,y)\longmapsto \delta G_x(m,y)$$
is uniformly continuous w.r.t. the measure-variable~$m$ uniformly w.r.t. the real-variables~$x,y$ on the set~$\P([-K,K])\times[-K,K]^2$.
\end{lem}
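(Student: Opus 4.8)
The plan is to mimic the proof of Lemma~\ref{lem:UC} almost verbatim, since the statement of Lemma~\ref{lem:UC2} is exactly its analogue with the function $G$ replaced by $(m,x,y)\mapsto\delta G_x(m,y)$ and with an extra real parameter to carry along. The key point is that the $C^{1,1}$ assumption gives joint continuity of $(m,x,y)\mapsto\partial_y\delta G_x(m,y)$, which plays the role that joint continuity of $\partial_x G$ played in Lemma~\ref{lem:UC}. So first I would fix $K>0$ and restrict attention to the compact set $\P([-K,K])\times[-K,K]^2$; on this compact, both $(m,x,y)\mapsto\partial_y\delta G_x(m,y)$ and the ``base point'' function $(m,x)\mapsto\delta G_x(m,0)$ are uniformly continuous, the latter because $\delta G_x(m,0)$ is jointly continuous in $(m,x)$ (it is the value at $y=0$ of the canonical derivative of the $C^1$ function $G_x$).

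Next I would write the fundamental theorem of calculus in the $y$-variable: for all $m\in\P_1(\r)$ and $x,y\in\r$,
$$\delta G_x(m,y) = \delta G_x(m,0) + \int_0^y \partial_z\delta G_x(m,z)\,dz.$$
Then, for $m,\mu\in\P([-K,K])$ and $x,y\in[-K,K]$, I would estimate
\begin{align*}
\left|\delta G_x(m,y) - \delta G_x(\mu,y)\right|
\leq& \left|\delta G_x(m,0) - \delta G_x(\mu,0)\right|
+ \int_{-|y|}^{|y|}\left|\partial_z\delta G_x(m,z) - \partial_z\delta G_x(\mu,z)\right|dz\\
\leq& \widetilde\omega(W_1(m,\mu)) + 2K\,\omega(W_1(m,\mu)),
\end{align*}
where $\omega$ denotes the modulus of continuity in the measure-variable of $(m,x,z)\mapsto\partial_z\delta G_x(m,z)$ on $\P([-K,K])\times[-K,K]^2$, and $\widetilde\omega$ the analogous modulus for $(m,x)\mapsto\delta G_x(m,0)$ on $\P([-K,K])\times[-K,K]$. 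Since both $\omega$ and $\widetilde\omega$ vanish at $0$, uniformly over $x,y\in[-K,K]$, this bound establishes the claimed uniform continuity in $m$ uniformly in $x,y$.

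The main thing to be careful about, rather than a genuine obstacle, is that the relevant derivative here is $\partial_y\delta G_x(m,y)$ (a $y$-derivative of the $m$-derivative $\delta G_x$), and one must confirm that the $C^{1,1}$ definition indeed supplies its joint continuity, together with the continuity of the boundary term $(m,x)\mapsto\delta G_x(m,0)$; the latter follows because $G$ is $C^1$ in $m$ so $\delta G_x$ is $C^{0,1}$, hence in particular continuous in $m$, jointly with $x$ via the stated hypotheses. Once these continuity facts are in hand, the estimate above is routine and the proof is complete; no new idea beyond the argument of Lemma~\ref{lem:UC} is required.
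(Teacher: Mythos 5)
Your proof is correct and follows essentially the same route as the paper: the fundamental theorem of calculus in the $y$-variable combined with the uniform continuity, on the compact set $\P([-K,K])\times[-K,K]^2$, of the jointly continuous function $(m,x,y)\mapsto\partial_y\delta G_x(m,y)$. The only (cosmetic) difference is the treatment of the base term: the paper observes that $(m,x)\mapsto\delta G_x(m,0)$ is $C^{0,1}$ and recursively applies Lemma~\ref{lem:UC}, whereas you invoke its joint continuity plus compactness directly --- which, as you yourself flag, rests on the joint continuity of $(m,x,y)\mapsto\partial_x\delta G_x(m,y)$ from the $C^{1,1}$ definition, i.e. exactly the ingredient Lemma~\ref{lem:UC} would consume.
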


\begin{proof}
Following the computation of the proof of Lemma~\ref{lem:UC}, for all~$m,\mu\in\P([-K,K])$, and~$x,y\in[-K,K]$,
$$\left|\delta G_x(m,y) - \delta G_x(\mu,y)\right|\leq |\delta G_x(m,0) - \delta G_x(m,0)| + 2K\omega(W_1(m,\mu)),$$
with $\omega$ the continuity modulus of $(m,x,y)\mapsto \partial_y\delta G_x(m,y)$ restricted to $\P([-K,K])\times[-K,K]^2$.
Then, since $G$ is $C^{1,1}$, the function $(m,x)\mapsto \delta G_x(m,0)$ is $C^{0,1}$. Whence Lemma~\ref{lem:UC} allows to conclude from the inequality above.
\end{proof}

\subsection{Main results}

The main result of the paper is a kind of ``First Fundamental Theorem of Calculus'' for measure-variable functions (i.e. Theorem~\ref{thm:TFA} below). Roughly speaking, if a function~$H:\P_1(\r)\times\r\rightarrow\r$ satisfies appropriate conditions, we can define an explicit antiderivative of~$H$ by integrating over the measure-variable of~$H$. 

\begin{thm}\label{thm:TFA}
Let $H:\P_1(\r)\times\r\rightarrow\r$ be $C^{0,1}$ satisfying for all~$K>0$:
\begin{itemize}
%\item[(i)] the following quantities exist and are bounded
%$$\underset{m\in \P_1(\r),x\in\r}{\sup}\left|\partial_x H(m,x)\right|<\infty;\underset{m\in \P_1(\r),x\in\r,y\in\r}{\sup}\left|\partial_y \delta H_x(m,y)\right|<\infty;{\underset{m\in \P_1(\r),x\in\r,y\in\r}{\sup}\left|\partial_x \delta H_x(m,y)\right|<\infty},$$
% \item[(ii)] the functions~$(m,x)\in \P_1(\r)\times\r\mapsto \partial_x H(m,x)$ and~$(m,x,y)\in \P_1(\r)\times\r^2\mapsto \delta H_y(m,x)$ are continuous,
\item[(i)] $H$ is $C^{1,1}$ on $\P([-K,K])\times[-K,K]$,
 \item[(ii)] for all~$m\in \P([-K,K]),$
 $$\int_\r H(m,x)dm(x)=0,$$
 \item[(iii)] for all~$m\in \P([-K,K]),x,y\in [-K,K],$
 $$\delta H_x(m,y) - H(m,x) = \delta H_y(m,x) - H(m,y).$$
 \end{itemize}
 
 Then the function
 \begin{equation}\label{eq:defF}
 F:m\in\P_1(\r)\longmapsto \int_0^1\int_\r H(t~m + (1-t)\delta_0,x)~d(m-\delta_0)(x)~dt
 \end{equation}
is {differentiable on~$\P_1(\r)$} with: for all~$m\in \P_1(\r),x\in\r,$
 $$\delta F(m,x) = H(m,x).$$
\end{thm}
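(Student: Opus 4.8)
The plan is to deduce the result from Theorem~\ref{prop:criteria}: it suffices to show that $F$ is Dawson-differentiable at every $m_0\in\P_1(\r)$ and every $z\in\r$, that $\Delta F(m_0,z)=H(m_0,z)$, and that $(m_0,z)\mapsto\partial_z\Delta F(m_0,z)$ is jointly continuous. The last point is immediate once $\Delta F=H$ is established, since $\partial_z H=\partial_x H$ is jointly continuous by hypothesis ($H$ is $C^{0,1}$); and the integral defining $F$ in~\eqref{eq:defF} converges because the segment $\{t\,m+(1-t)\delta_0:t\in[0,1]\}$ is compact in $\P_1(\r)$, so the sublinearity of $H$ in its real variable gives $|H(t\,m+(1-t)\delta_0,x)|\leq C_m(1+|x|)$, which is $d|m-\delta_0|$-integrable. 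Thus the task reduces to computing $\Delta F$ and identifying it with $H$.

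First I would compute the Dawson derivative. Writing $m_\eps:=(1-\eps)m_0+\eps\delta_z$ and $\mu_t^\eps:=t\,m_\eps+(1-t)\delta_0$, I differentiate $\eps\mapsto F(m_\eps)$ at $\eps=0$ under the double integral. Two contributions appear: one from the measure $d(m_\eps-\delta_0)$ (whose $\eps$-derivative is $d(\delta_z-m_0)$), and one from the measure argument $\mu_t^\eps$ of $H$ (whose $\eps$-derivative, by the $C^1$-regularity of $H$ in $m$ from condition~(i), is expressed through the linear functional derivative $\delta H_x$). Justifying the interchange of the limit $\eps\to0$ with the integrals is done exactly as in Lemma~\ref{lem:lineardawson}, via Vitali's theorem and the uniform integrability supplied by Lemma~\ref{lem:UI}, after restricting the relevant measures to the compact $\P([-K',K'])$ with $K'=\max(K,|z|)$ when $m_0\in\P([-K,K])$. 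This yields $\Delta F(m_0,z)=A+B$ with, writing $\mu_t:=t\,m_0+(1-t)\delta_0$,
\begin{align*}
A&=\int_0^1 t\int_\r\int_\r\delta H_x(\mu_t,y)\,d(\delta_z-m_0)(y)\,d(m_0-\delta_0)(x)\,dt,\\
B&=\int_0^1\int_\r H(\mu_t,x)\,d(\delta_z-m_0)(x)\,dt.
\end{align*}

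The key step is to simplify $A$ using condition~(iii), which rewrites $\delta H_x(\mu_t,y)=\delta H_y(\mu_t,x)-H(\mu_t,y)+H(\mu_t,x)$. The two terms $-H(\mu_t,y)$ and $+H(\mu_t,x)$ integrate to zero against, respectively, the zero-mass measure $d(m_0-\delta_0)$ (the first is constant in $x$) and $d(\delta_z-m_0)$ (the second is constant in $y$). There remains only the term with $\delta H_y(\mu_t,x)$, and since $\tfrac{d}{dt}\mu_t=m_0-\delta_0$, the inner integral $\int_\r\delta H_y(\mu_t,x)\,d(m_0-\delta_0)(x)$ equals $\tfrac{d}{dt}H(\mu_t,y)$. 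Hence $A=\int_0^1 t\,\psi'(t)\,dt$ with $\psi(t):=\int_\r H(\mu_t,y)\,d(\delta_z-m_0)(y)$. An integration by parts gives $A=\psi(1)-\int_0^1\psi(t)\,dt=\psi(1)-B$. Finally $\mu_1=m_0$, so $\psi(1)=\int_\r H(m_0,y)\,d(\delta_z-m_0)(y)=H(m_0,z)$ by condition~(ii), which simultaneously confirms that $H(m_0,\cdot)$ is the canonical version. Therefore $\Delta F(m_0,z)=A+B=H(m_0,z)$, and Theorem~\ref{prop:criteria} concludes.

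I expect the main obstacle to be the rigorous justification of this Dawson-derivative computation — differentiating twice under the integral sign (once in $\eps$, once through the inner $m$-derivative of $H$) and commuting the limit $\eps\to0$ with both integrals — for a general, possibly non-compactly supported $m_0$, since conditions~(i)–(iii) are postulated only on the sets $\P([-K,K])\times[-K,K]$. I would handle this by first establishing $\Delta F=H$ for compactly supported $m_0$, where the full $C^{1,1}$-regularity and the pointwise validity of (ii) and (iii) are available on a large enough compact, and then extending the identities (ii) and (iii) to all of $\P_1(\r)$ by density of the compactly supported measures together with the global continuity furnished by the $C^{0,1}$ assumption and Lemma~\ref{lem:UI}. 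Everything else is the bookkeeping of the integration by parts above.
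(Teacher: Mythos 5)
Your core computation of the Dawson derivative is correct, and it is in fact tidier than the paper's own: where the paper splits the difference quotient into $A_1+A_2+B$ and grinds through the chain of identities ending at~\eqref{eq:finfin1}--\eqref{eq:finfin2}, you dispose of the second-order term by a single application of condition~$(iii)$ (the two extra terms killed by the zero-mass measures) followed by an integration by parts in~$t$, with the normalization~$(ii)$ entering exactly once through $\psi(1)=H(m_0,z)$. Each interchange you invoke (chain rule along the segment $t\mapsto\mu_t$ as in Lemma~\ref{lem:215}, differentiation under the integral) is legitimate on compact sets thanks to the $C^{1,1}$ hypothesis and Lemma~\ref{lem:UC2}.

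The gap is in the reduction. Theorem~\ref{prop:criteria} does not have the hypotheses you ascribe to it: it requires, for every $K>0$, that $F$ be Lipschitz continuous on $\P([-K,K])$ (its condition~$(i)$, which you never address) and that the Dawson difference quotients converge to $H$ \emph{uniformly} over $(m,x)\in\P([-K,K])\times[-K,K]$ (its condition~$(ii)$). You establish only pointwise Dawson differentiability at each fixed $(m_0,z)$ --- Vitali's theorem, as used in Lemma~\ref{lem:lineardawson}, yields pointwise convergence only --- and then assert that joint continuity of $\partial_z\Delta F$ makes up the difference. That is not the statement of the criterion, and the paper explicitly warns that the uniformity ``is stronger than what could be expected, but it is required in our proof'': it is what feeds the quantity $\rho_F$ in Lemma~\ref{lem:diffDn}, on which the whole atomic-approximation argument rests. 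The repair is not conceptually hard but it is the bulk of the work: the Lipschitz bound follows from Kantorovich--Rubinstein duality and the boundedness of $\partial_x H$ on compacts; and your limit computation must be made quantitative, replacing Vitali by error bounds expressed through the continuity moduli of $H$ and of $(\nu,y,z)\mapsto\delta H_y(\nu,z)$ on $\P([-K,K])\times[-K,K]^2$ (Lemmas~\ref{lem:UC} and~\ref{lem:UC2}), which are uniform because every measure appearing in your computation stays in $\P([-K,K])$ and moves by a $W_1$-distance $O(\eps)$ uniformly in $(m_0,z)$. Finally, your last paragraph worries about the wrong obstacle: non-compactly supported $m_0$ never enters the verification, since the hypotheses of Theorem~\ref{prop:criteria} involve only measures in $\P([-K,K])$, while its conclusion already delivers differentiability on all of $\P_1(\r)$; no density argument on your side is needed, and ``extending $(ii)$ and $(iii)$ to all of $\P_1(\r)$'' is not what such an argument would have to produce in any case.
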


As expected, the regularity of the integrand~$H$ is required for our main result. While the mere continuity of~$H$ could have been expected to be sufficient, our proof requires a bit more regularity: $H$ is of class $C^{0,1}$ on $\P_1(\r)\times\r$, which is standard in the framework of the linear differentiability (for example, our class of functions~$F:\P_1(\r)\to\r$ $C^1$ is included in the class $\mathcal{C}^{1,1}$ of \cite{guo_itos_2023}, and is the class ``fully $\mathcal{C}^1$'' of \cite{chassagneux_weak_2022}), and of class $C^{1,1}$ at any compactly supported measure. This second regularity condition is needed in our proof.

Condition~$(ii)$ is completely artificial: for any function $H:\P_1(\r)\times\r\rightarrow\r$, the function
$$(m,x)\in\P_1(\r)\times\r\longmapsto H(m,x) - \int_\r H(m,x)\,dm(x)$$
satisfies condition~$(ii)$. We prefer to impose~$(ii)$ to stick to Definition~\ref{def:deriv2}. The last condition~$(iii)$ which does not seem natural is actually a necessary condition (provided the other hypotheses). This is a symmetrical property of the second order derivatives, which is formally proved in Lemma~\ref{lem:intervert}:  since this lemma is useless for the proofs of our main results, this explanation is postponed to Appendix~\ref{append:syme}. We provide a counter-example for Theorem~\ref{thm:TFA} when condition~$(iii)$ is not satisfied at Appendix~\ref{ref:counter}. Notice that the form of the equality in~$(iii)$ above depends on the condition~$(ii)$, but even without~$(ii)$ an identity would persist involving the quantity $\int H(m,\bullet)dm$.

\begin{rem}\label{rem:iiidecept}
Condition~$(iii)$ is deceptively simple and make the proof of Theorem~\ref{thm:TFA} substantially harder. If it is known that $H$ is the derivative of some function (i.e. there exists~$\tilde F:\P_1(\r)\to\r$ such that $H = \delta \tilde F$), then, necessarily, $H$ is a version of the derivative of the function~$F$ defined at~\eqref{eq:defF}: indeed,
$$\tilde F(m) - \tilde F(\delta_0) = \int_0^1\int_\r H((1-t)\delta_0+tm,x)d(m-\delta_0)(x)dt =: F(m),$$
hence (by Lemma~\ref{lemelem}) $\delta F = \delta \tilde F = H$.

However, a function~$H:\P_1(\r)\times\r\to\r$ (even as smooth as possible) is not necessary the derivative of a measure-variable function, because of the necessity of condition~$(iii)$ of Theorem~\ref{thm:TFA}. In particular, the sketch of an easy (but wrong) proof of Theorem~\ref{thm:TFA} would consist in approximating smooth enough functions~$H$ with a particular class of ``practical'' functions by Stone-Weierstrass theorem, in order to work only on this class, and then to obtain the general result by a density argument. But, for this to work, one should guarantee that this class of functions is an algebra included in the set of all the derivatives~$\delta F$ (for $F$ smooth enough), and condition~$(iii)$ makes this class hard to define. If we were able to find such a class, it would be possible to substantially simplify the proof of Theorem~\ref{thm:TFA}. So a subtle consequence of Theorem~\ref{thm:TFA} is a criterion for a function~$H:\P_1(\r)\times\r\to\r$ to be the derivative of a measure-variable function. Let us refer to Example~\ref{ex:counter} for an explicit and smooth function~$H:\P_1(\r)\times\r\to\r$ that cannot be a version of the derivative of a measure-variable function.

Notice that \cite{cox_controlled_2024} have bypassed this problem in their Theorems~4.4 and~4.10 in their framework using the approximation scheme that we reuse at Section~\ref{sec:approx}. But their trick cannot be used to prove Theorem~\ref{thm:TFA} since it requires to work on a function~$F$ that is assumed to be differentiable. 
\end{rem}

The proof of Theorem~\ref{thm:TFA} requires the next result stating that the Dawson-differentiability with $C^{0,1}$ Dawson-derivative implies the differentiability of measure-variable functions. It can be compared to the classical result which claims that existence and continuity of the partial derivatives is a sufficient condition for the Fr\'echet-differentiability for functions defined on~$\r^d$ ($d\in\n^*$). It is a partial converse of Lemma~\ref{lem:lineardawson} above.

\begin{thm}\label{prop:criteria}
Let $F:\P_1(\r)\rightarrow\r$ be continuous. Assume that there exists~$H:\P_1(\r)\times\r\to\r$ of class $C^{0,1}$ such that: for any~$K>0$,
\begin{itemize}
\item[(i)] $F$ is Lipschitz continuous on $\mathcal{P}([-K,K])$,
\item[(ii)] $F$ is uniformly Dawson-differentiable on $\P([-K,K])\times[-K,K]$ with derivative~$H$:
$$\underset{x\in [-K,K], m\in \P([-K,K])}{\sup}\left|\frac1\eps\left(F((1-\eps)m + \eps\delta_x) - F(m)\right) - H(m,x)\right|\underset{\eps\rightarrow 0}{\longrightarrow} 0,$$
 \item[(iii)] for all~$m\in \P([-K,K]),$ $\int_\r H(m,x)~dm(x) = 0$.
 \end{itemize}
 
 Then $F$ is {differentiable on $\P_1(\r)$} with: for all~$m\in \P_1(\r),x\in\r,$
 $$\delta F(m,x) = H(m,x).$$
\end{thm}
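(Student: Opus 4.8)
The plan is to prove the integral identity of Definition~\ref{def:deriv2}, namely that for all $m,m_0\in\P_1(\r)$ one has $F(m)-F(m_0)=\int_0^1\int_\r H((1-t)m_0+tm,x)\,d(m-m_0)(x)\,dt$, and then to read off $\delta F=H$ from condition $(iii)$, which is exactly the canonical normalisation \eqref{eq:cano}. I would first reduce to measures carried by a common finite set of atoms, using the approximation scheme of \cite{cox_controlled_2024}: fix $K>0$ and approximate $m,m_0\in\P([-K,K])$ by purely atomic measures supported on a fixed finite grid $\{a_1,\dots,a_d\}\subset[-K,K]$, converging in $W_1$. On the affine span of these atoms a measure $\mu=\sum_i p_i\delta_{a_i}$ is encoded by its weight vector $p=(p_1,\dots,p_d)$ in the simplex $\Sigma_d$, and the whole problem becomes finite-dimensional through $g(p):=F(\sum_i p_i\delta_{a_i})$.

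The heart of the proof is to show that $g$ is continuously differentiable on the relative interior of $\Sigma_d$. Moving $\mu$ toward the vertex $\delta_{a_j}$ is exactly a Dawson perturbation $(1-\eps)\mu+\eps\delta_{a_j}$, whose weight displacement is $\eps(e_j-p)$; hence by hypothesis $(ii)$ the one-sided directional derivative of $g$ at $p$ along $e_j-p$ exists, equals $H(\mu,a_j)$, and is continuous in $p$ by the $C^{0,1}$ regularity of $H$ (Lemma~\ref{lem:UC}). The vectors $(e_j-p)_{j=1}^d$ span the tangent space $\{c:\sum_i c_i=0\}$ of $\Sigma_d$ with the single relation $\sum_j p_j(e_j-p)=0$; here condition $(iii)$, i.e. $\sum_j p_j H(\mu,a_j)=0$, is precisely what makes the prescription $L_p(e_j-p):=H(\mu,a_j)$ a genuine linear functional $L_p$ on the tangent space, with $L_p(c)=\sum_j c_j H(\mu,a_j)$ continuous in $p$. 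I would then upgrade these one-sided vertex-derivatives to full differentiability as follows: a general tangent increment $hc$ can be realised, up to $O(h^2)$, as a nonnegative superposition of vertex moves $\eta_j(e_j-p)$ with $\eta_j=hc_j+S p_j\ge0$ for $S$ large (possible since $p$ is interior), performed sequentially; summing the uniform Dawson expansions of $(ii)$ over these $d$ steps, the contribution $S\sum_j p_j H(\mu,a_j)$ vanishes by $(iii)$ and the base-point drift contributes only $o(h)$ by continuity of $H$, leaving $g(p+hc)-g(p)=h\,L_p(c)+o(h)$ uniformly. Thus along the segment $p(t)=(1-t)p^0+tp$ the map $t\mapsto g(p(t))$ is $C^1$ with derivative $\sum_i(p_i-p_i^0)H(\mu_t,a_i)$, and integrating over $[0,1]$ yields the desired identity for the atomic measures.

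It remains to remove the finite-atom restriction and then the compact-support restriction. For fixed $K$, I would let the grid refine so that the atomic approximations converge to $m,m_0$ in $W_1$; the left-hand side passes to the limit by the Lipschitz continuity $(i)$ of $F$ on $\P([-K,K])$, while the right-hand side passes to the limit using the joint continuity of $\partial_xH$ together with the uniform-continuity and uniform-integrability Lemmas~\ref{lem:UC} and~\ref{lem:UI} (via Vitali's theorem), which justify interchanging the limit with the double integral in $t$ and $x$. This establishes \eqref{derivtildeH} for all $m,m_0\in\P([-K,K])$, and then for all $m,m_0\in\P_1(\r)$ by a truncation argument letting $K\to\infty$, the right-hand side being controlled by the sublinearity and uniform integrability furnished by the $C^{0,1}$ hypothesis. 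Finally, since $H$ is measurable and sublinear uniformly on compacts and satisfies \eqref{eq:cano} by $(iii)$, it is the canonical version of the derivative, so $\delta F=H$.

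I expect the main obstacle to be the finite-dimensional differentiability step: the Dawson hypothesis only supplies one-sided derivatives along the moving vertex-directions $e_j-p$, so neither the additivity of the directional derivative nor the choice of a canonical gradient is a priori available. The crucial point — and the reason condition $(iii)$ is indispensable here — is that $(iii)$ both makes the candidate functional $L_p$ well defined on the tangent space and annihilates the spurious normal component $S\sum_j p_j H(\mu,a_j)$ produced when a general increment is decomposed into nonnegative vertex moves. The secondary technical difficulty is the passage from compactly supported to arbitrary first-moment measures, where one must keep the integrand in \eqref{derivtildeH} uniformly integrable along the approximation.
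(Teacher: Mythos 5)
Your proposal is correct and follows the same architecture as the paper's proof: reduction to purely atomic measures on a finite grid via the scheme of \cite{cox_controlled_2024}, a finite-dimensional differentiability argument in which hypothesis~$(ii)$ supplies Dawson expansions along vertex moves and hypothesis~$(iii)$ provides the key cancellation, differentiation of $t\mapsto F((1-t)m_0+t\,m)$ followed by the classical fundamental theorem of calculus, and a two-stage limit (grid refinement, then truncation to remove the compact-support restriction) resting on exactly the ingredients you cite (the paper packages this limit passage as Lemma~\ref{lem:PhiCont}). The one point where you genuinely diverge is the organization of the finite-dimensional step. The paper's Lemma~\ref{lem:diffDn} proves the expansion by induction on the number of sequential vertex moves (Step~2) and then compares two arbitrary atomic measures by writing both as nonnegative vertex perturbations of a common base measure, the normalized coordinatewise minimum $\tilde m$ (Step~3); this works on the whole simplex, boundary included, and $(iii)$ enters there through the identity $\int_\r \Delta F(m,y)\,d(m^x_\eps-m)(y)=\eps\,\Delta F(m,x)$. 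You instead work at an interior point $p$ and decompose a tangent increment $hc$ into nonnegative vertex moves $\eta_j(e_j-p)$ with $\eta_j=hc_j+Sp_j$, letting $(iii)$ annihilate the normal component $S\sum_j p_jH(\mu,a_j)$; this is a more transparent ``Gateaux implies Fr\'echet'' mechanism, but it degenerates near the boundary of the simplex, so when an endpoint of the segment has vanishing weights you only obtain differentiability of $t\mapsto g(p(t))$ on the open interval $]0,1[$ and must invoke continuity and boundedness of the candidate derivative to integrate; the paper's min-measure construction avoids this issue entirely.

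One detail must be fixed for your argument to close: $S$ has to be taken of order $h$ (the minimal admissible choice $S=h\max_j \max(-c_j,0)/p_j$ works), not a fixed large constant. With $S$ of order one, the individual vertex moves have size $O(1)$, so the gap between the sequentially composed measure and the target $p+hc$ is $O(S^2)$, and the base-point drift inside the Dawson expansions contributes errors of order $S\,\omega(S)$, with $\omega$ the relevant modulus of continuity of $H$ in the measure variable; neither quantity is $o(h)$. Your claimed bounds $O(h^2)$ and $o(h)$ hold precisely when $S=O(h)$. Relatedly, it is the Lipschitz hypothesis~$(i)$ that converts the $O(h^2)$ mismatch in $W_1$ between the composed measure and $p+hc$ into an $O(h^2)$ error on values of $F$ --- this is where $(i)$ enters the finite-dimensional step (the paper uses it identically in the terms controlled by $L_F$ in Step~2 of Lemma~\ref{lem:diffDn}), so its role there should be stated explicitly rather than only in the limit passage.
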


Once again, in this kind of result, assumptions about the regularity of~$H$ is expected. As explained previously, the condition~$C^{0,1}$ is very standard in this setting. Condition~$(i)$ which is the local Lipschitz continuity of~$F$ is necessary: if $\delta F= H$ is $C^{0,1}$ then the function $\partial_x\delta F(m,x)$ is bounded on the compact sets, and hence, $\delta F(m,x)$ is Lipschitz continuous w.r.t.~$x$ uniformly w.r.t~$m$, restricting $(m,x)\in\P([-K,K])\times[-K,K]$, and, recalling Definition~\ref{def:deriv2}, Kantorovich-Rubinstein duality implies the desired local Lipschitz continuity. The hypothesis~$(ii)$ about the uniform Dawson-differentiability on some compact sets is stronger than what could be expected, but it is required in our proof. And, as proved at~\eqref{eq:dawson0}, condition~$(iii)$ is also necessary. It is not clear whether the property~$(iii)$ can be deduced from the other assumptions.

Note that, proving that a measure-variable function~$F$ is differentiable can be complicated since the definition of $\delta F$ is implicit. Proposition~\ref{prop:criteria} gives a practical criterion to prove this differentiability reducing it to the Dawson-differentiability, whose corresponding derivative~$\Delta F$ is explicit and coincides with~$\delta F$. This is how Theorem~\ref{prop:criteria} is used in the proof of Theorem~\ref{thm:TFA}.

\section{Proof of Theorem~\ref{prop:criteria}}\label{sec:proofThmcri}

\subsection{Atomic measures approximation}\label{sec:approx}

This section is dedicated to prove that atomic measures are dense in $\P_1(\r)$ providing an explicit convergence scheme. The scheme below is a mere particular case of the one defined at Section~4.1 of \cite{cox_controlled_2024}, that we reproduce here for the sake of clarity.

Let $n\in\n^*$, $K\in\n^*$ and $m\in\P_1(\r)$ whose support is included in~$[-K,K]$. Then, for all $k\in\llbracket -n K+1,n K-1\rrbracket$, let us define
%$$I_{n,k} = \ll]\frac{k-1}{n},\frac{k+1}{n}\rr[,$$
%and
%$$I_{n,-n K} = \ll]-\infty,\frac{-n K+1}{n}\rr[\textrm{ and }I_{n,n K}=\ll]\frac{nK-1}{n},+\infty\rr[.$$
$$I_{n,k} = \ll]\frac{k-1}{n},\frac{k+1}{n}\rr[~~;~~I_{n,-n K} = \ll]-\infty,\frac{-n K+1}{n}\rr[~~;~~I_{n,n K}=\ll]\frac{nK-1}{n},+\infty\rr[.$$

Then $I_{n,k}$ ($-n K\leq k\leq n K$) is an open cover of~$\r$, so we can consider a partition of unity~$(\psi_{n,k})_k$ subordinate to $(I_{n,k})_k$ such that:
\begin{itemize}
\item the functions~$\psi_{n,k}$ are $C^\infty$ and non-negative,
\item $\textrm{Supp}~\psi_{n,k}\subseteq I_{n,k}$,
\item and, for all~$x\in\r,$ $$\sum_{k=-n K}^{n K} \psi_{n,k}(x)=1.$$
\end{itemize}

We write
$$m^{[n]} =  \sum_{k=-n K}^{n K} \left(\int_\r \psi_{n,k}(x)~dm(x)\right) \delta_{k/n}.$$
%and define, for any~$G\in C^2_b(\P(\r))$,
%$$G^{[n]} : \mu\in\P_1(\r)\longmapsto G\ll(\mu^{[n]}\rr).$$

The following result corresponds to Lemma~4.5.(v)  of \cite{cox_controlled_2024} with an explicit convergence speed. We provide a proof for self-completeness at Appendix~\ref{append:proofstech}.
\begin{prop}\label{prop:mncv}
For all $K\in\n^*$ and~$m\in\P([-K,K])$, for every~$n\geq 1$,
$$W_1\ll(m,m^{[n]}\rr)\leq \frac3n.$$
\end{prop}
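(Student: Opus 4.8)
The plan is to estimate $W_1(m,m^{[n]})$ directly, through the Kantorovich--Rubinstein dual formula recalled in the Notation section together with the defining property $\sum_k \psi_{n,k}\equiv 1$ of the partition of unity. First I would fix a test function $f\in\mathrm{Lip}_1$ and rewrite the difference of the two integrals. Since $\sum_k \psi_{n,k}(x)=1$ for every $x$, one has $\int_\r f\,dm=\sum_k\int_\r \psi_{n,k}(x)f(x)\,dm(x)$, whereas the very definition of $m^{[n]}$ gives $\int_\r f\,dm^{[n]}=\sum_k\left(\int_\r \psi_{n,k}(x)\,dm(x)\right)f(k/n)$. Subtracting,
\[
\int_\r f\,d(m-m^{[n]})=\sum_{k=-nK}^{nK}\int_\r \psi_{n,k}(x)\,(f(x)-f(k/n))\,dm(x).
\]
The same identity with $f$ replaced by the constant $1$ shows incidentally that $m^{[n]}$ is a probability measure.

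Next I would exploit that $f$ is $1$-Lipschitz, so $|f(x)-f(k/n)|\le |x-k/n|$, reducing everything to the pointwise estimate
\[
|x-k/n|\le \tfrac1n\qquad\text{on }\ \mathrm{Supp}\,\psi_{n,k}\cap[-K,K].
\]
For an interior index $-nK+1\le k\le nK-1$ this is automatic and needs no reference to $m$: $\mathrm{Supp}\,\psi_{n,k}\subseteq I_{n,k}=(\tfrac{k-1}{n},\tfrac{k+1}{n})$ already forces $|x-k/n|<1/n$. The main obstacle is the two \emph{unbounded} boundary intervals $I_{n,\pm nK}$, on which $\psi_{n,\pm nK}$ may have large support: there one cannot bound $|x-k/n|$ without additional information. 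This is exactly where the hypothesis $\mathrm{Supp}\,m\subseteq[-K,K]$ is essential. For $k=nK$ the atom sits at $k/n=K$ and $\mathrm{Supp}\,\psi_{n,nK}\cap[-K,K]\subseteq(K-\tfrac1n,K]$, so $|x-K|<1/n$ for every relevant $x$; the case $k=-nK$ is symmetric. Thus the displayed pointwise bound holds for \emph{all} $k$ once we integrate against $m$.

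Finally I would factor out the constant $1/n$ and resum with the partition-of-unity identity:
\[
\left|\int_\r f\,d(m-m^{[n]})\right|\le \frac1n\sum_{k}\int_\r \psi_{n,k}(x)\,dm(x)=\frac1n\int_\r \left(\sum_k\psi_{n,k}(x)\right)dm(x)=\frac1n .
\]
Taking the supremum over $f\in\mathrm{Lip}_1$ gives $W_1(m,m^{[n]})\le 1/n$, which is a fortiori the announced bound $3/n$. I note that this argument in fact yields the sharper constant $1/n$ and does not even require $n\ge K+1$, so the precise form of the statement presumably reflects the cruder but more general estimate of \cite{cox_controlled_2024}. An equivalent and perhaps more intuitive route avoids the duality altogether: one exhibits the explicit coupling that splits the mass of $m$ located at $x$ among the atoms $k/n$ with weights $\psi_{n,k}(x)$. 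Its first marginal is $m$ (partition of unity), its second marginal is $m^{[n]}$ (definition of $m^{[n]}$), and its transport cost is precisely the right-hand side above, leading to the same bound.
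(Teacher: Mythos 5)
Your proof is correct and follows the same overall strategy as the paper's: Kantorovich--Rubinstein duality, the rewriting of $\int_\r f\,d(m-m^{[n]})$ as $\sum_k\int_\r\psi_{n,k}(x)\,(f(x)-f(k/n))\,dm(x)$ via the partition of unity, and the Lipschitz bound $|f(x)-f(k/n)|\le|x-k/n|$. The only genuine difference is the treatment of the two boundary indices $k=\pm nK$. The paper drops the weight $\psi_{n,\pm nK}\le 1$ there and bounds each of the two boundary integrals separately by $1/n$ (using that $m$ puts no mass beyond $\pm K$), which, added to the bound $1/n$ obtained by resumming the interior terms, gives $3/n$. You instead observe that the support condition on $m$ makes the pointwise estimate $|x-k/n|\le 1/n$ valid $m$-a.e.\ on $\mathrm{Supp}\,\psi_{n,k}$ for \emph{every} $k$, boundary indices included, so the partition-of-unity resummation can be carried out over all indices at once; this yields the sharper constant $1/n$, and without the restriction $n\ge K+1$ (a hypothesis that the paper's written proof does not invoke either). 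The explicit coupling you sketch at the end --- splitting the mass of $m$ at $x$ among the atoms $k/n$ with weights $\psi_{n,k}(x)$ --- is also valid: its marginals are $m$ and $m^{[n]}$ and its cost is exactly the quantity you bounded, so it gives the same estimate while bypassing duality altogether. Either variant is a legitimate (indeed slightly stronger) substitute for the paper's argument.
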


\subsection{``Differentiability'' of $F$ on a set of atomic measures}

For~$n\in\n^*,K\in\n^*$, let $D_n^K$ be the following convex set
$$D_n^K = \left\{\sum_{k=-n K}^{n K} \lambda_k \delta_{k/n}~:~\forall -nK\leq k\leq nK,\,\lambda_k\geq 0\textrm{ and }\sum_{l=-nK}^{nK} \lambda_l = 1\right\}.$$

In this subsection, we show that, provided that $F$ is ``uniformly Dawson-differentiable'' on~$D_n^K$, $F$ is ``differentiable'' on $D_n^K$. Note that, formally, it can be problematic to use the term differentiability on $D^K_n$ since it is not an open set. But since it is convex, it is still possible to state the following lemma.

\begin{lem}\label{lem:diffDn}
Let $n\in\n^*,K\in\n^*$, and $F: \P([-K,K])\rightarrow\r$ be Lipschitz continuous on $\P([-K,K])$ and assume that $F$ is ``uniformly Dawson-differentiable on~$\P([-K,K])$'':
\begin{equation}\label{eq:unifDeriv}
\underset{m\in D_n^K,x\in [-K,K]}{\sup}\left|\frac1\eps\left(F\left((1-\eps)m + \eps\delta_x\right) - F(m)\right) - \Delta F(m,x)\right|\underset{\eps\rightarrow 0}{\longrightarrow}0.
\end{equation}
In addition, assume that for all~$m\in D_n^K$, $\int_\r \Delta F(m,x)dm(x)=0$ and that $\Delta F$ is $C^{0,1}$ on $D^K_n$.

Then, for all~$m,\mu\in D_n^K,$
$$F(m) - F(\mu) = \int_0^1 \int_\r \Delta F((1-t)\mu + tm,x)~d(m-\mu)(x)~dt.$$
\end{lem}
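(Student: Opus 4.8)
The plan is to exploit the fact that $D_n^K$ is, up to identification, a finite-dimensional simplex, and to reduce the claimed identity to the ordinary fundamental theorem of calculus along a line segment. Writing $a_k = k/n$ and $N = 2nK+1$, I identify a measure $\sum_k \lambda_k \delta_{a_k} \in D_n^K$ with its weight vector $\lambda$ in the simplex $\Sigma = \{\lambda \in \r_+^N : \sum_k \lambda_k = 1\}$, and set $g(\lambda) := F(\sum_k \lambda_k\delta_{a_k})$ and $G_j(\lambda) := \Delta F(\sum_k\lambda_k\delta_{a_k}, a_j)$. Since $W_1(\sum_k p_k\delta_{a_k},\sum_k q_k\delta_{a_k}) \le K\sum_k|p_k - q_k|$, the Lipschitz continuity of $F$ transfers to $g$, and the $C^{0,1}$ regularity of $\Delta F$ makes each $G_j$ continuous on $\Sigma$. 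With $\nu,\lambda$ the weight vectors of $\mu,m$, the claimed identity is exactly the line-integral formula $g(\lambda) - g(\nu) = \int_0^1 \sum_j(\lambda_j - \nu_j)G_j(\gamma(t))\,dt$ along the segment $\gamma(t) = (1-t)\nu + t\lambda$, because $\int_\r \Delta F(\gamma(t),x)\,d(m-\mu)(x) = \sum_j(\lambda_j - \nu_j)G_j(\gamma(t))$.

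The heart of the argument, and the step I expect to be the main obstacle, is to show that $g$ has the right directional derivatives. Precisely, I would prove the following crux claim: if $\lambda$ lies in the relative interior of a face of $\Sigma$ and $w$ is a tangent vector to that face (so $\sum_j w_j = 0$), then the two-sided directional derivative of $g$ at $\lambda$ in direction $w$ exists and equals $\sum_j w_j G_j(\lambda)$. The key geometric observation is that, $\lambda$ being relatively interior, $0$ lies in the relative interior of the convex hull of the directions $\{e_j - \lambda\}$ pointing towards the vertices, so one can write $w = \sum_j c_j(e_j - \lambda)$ with all $c_j \ge 0$. I would then realise the straight displacement $\lambda \mapsto \lambda + sw$ (for small $s>0$) approximately as a finite chain of Dawson moves towards the vertices, the $j$-th move being $p \mapsto (1-sc_j)p + sc_j\delta_{a_j}$. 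Telescoping the increments of $F$ along this chain and invoking the uniform Dawson-differentiability to write each increment as $sc_j\,G_j(\text{current point}) + sc_j\,o(1)$, together with the continuity of the $G_j$ to replace the current points by $\lambda$, yields $F(\text{endpoint of chain}) - F(\lambda) = s\sum_j c_j G_j(\lambda) + o(s)$. Since the chain endpoint equals $\lambda + sw$ up to an $O(s^2)$ error in the weights (the cross terms of the successive convex combinations), the Lipschitz continuity of $F$ lets me replace it by $\lambda + sw$ at cost $O(s^2)$, and dividing by $s$ gives the one-sided derivative $\sum_j c_j G_j(\lambda)$.

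Two points deserve care here. First, the coefficients $c_j$ are not unique, but any two representations differ by adding a multiple of $\lambda$; the assumption $\int_\r \Delta F(m,x)\,dm(x) = 0$, i.e. $\sum_j \lambda_j G_j(\lambda) = 0$, guarantees that $\sum_j c_j G_j(\lambda)$ is independent of the representation and, applied to $w$ and to $-w$, that the two one-sided derivatives fit together into a genuine two-sided derivative equal to $\sum_j w_j G_j(\lambda)$; this is exactly where that hypothesis enters. It is also essential that $N$ is finite, so that the finitely many $o(1)$ and $O(s^2)$ error terms sum to $o(s)$ uniformly; this is the reason for first localising to $D_n^K$. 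Finally, I would apply the crux claim along the segment: for $t \in (0,1)$ the point $\gamma(t)$ lies in the relative interior of the face spanned by the atoms charged by $m$ or $\mu$, and $w = \lambda - \nu$ is tangent to that face, so $r(t) := g(\gamma(t))$ is differentiable with $r'(t) = \sum_j(\lambda_j - \nu_j)G_j(\gamma(t))$, which is continuous and bounded in $t$. The ordinary fundamental theorem of calculus then gives $F(m) - F(\mu) = r(1) - r(0) = \int_0^1 r'(t)\,dt$, which is precisely the asserted formula.
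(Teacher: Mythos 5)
Your proof is correct, but it is organized around a different key lemma than the paper's, so a comparison is in order. Both arguments share the same core mechanism: realize a small displacement inside $D^K_n$ as a finite chain of single-atom Dawson moves $p\mapsto(1-\eps)p+\eps\delta_{a_j}$, telescope the increments of $F$ using the \emph{uniformity} in~\eqref{eq:unifDeriv} (essential, since the base points of the moves vary with the step size), absorb the quadratic mismatch by Lipschitz continuity, and conclude by the classical fundamental theorem of calculus along $t\mapsto(1-t)\mu+tm$. Where you differ is in the intermediate step. The paper proves a \emph{uniform, quantitative} approximate-differentiability estimate valid for every pair of measures on the same atoms: an induction on the number of displaced atoms (its Step~2, inequality~\eqref{eq:induction}), followed by a comparison of two arbitrary weight vectors through a common base measure $\tilde m$ built from the minimum weights $l_k\wedge\lambda_k$ (its Step~3, inequality~\eqref{eq:step2}), everything expressed in explicit moduli $\rho_F,\omega_{\Delta F},\omega_{\partial F}$. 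You instead prove a \emph{pointwise} statement: genuine two-sided directional differentiability of $g$ at relative-interior points of a face, using the conic representation $w=\sum_j c_j(e_j-\lambda)$ with $c_j\geq 0$, which is available precisely because $0=\sum_j\lambda_j(e_j-\lambda)$ is a strictly positive combination. This replaces the paper's base-measure construction entirely, and your observation that the non-uniqueness of the coefficients $c_j$ is resolved by the hypothesis $\sum_j\lambda_jG_j(\lambda)=\int_\r\Delta F(m,x)\,dm(x)=0$ is the exact counterpart of the paper's use of that hypothesis in the $d=1$ base case of its induction. What each approach buys: yours is leaner, needs only pointwise continuity of $\Delta F(\cdot,a_j)$ rather than uniform moduli, and isolates the finite-dimensional convex geometry cleanly; the paper's yields uniform estimates over all of $D^K_n$ with explicit rates, which is more than the lemma needs but matches the uniform style of the surrounding results. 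One point to make explicit if you write this up: the chain endpoint differs from $\lambda+sw$ by $O(s^2)$ in the $\ell^1$ norm of weights \emph{with a constant depending on $\sum_j c_j$ and the (finite) number of atoms}, which is harmless here but is the reason the argument is confined to $D^K_n$ in the first place.
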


\begin{proof}
Let us denote, for $h>0,$
\begin{align*}
%\omega_F\left(h\right) :=&~\underset{m,\mu\in D_n^K, W_1(m,\mu)\leq h}{\sup}~~\left|F(m)-F(\mu)\right|,\\
\rho_F\left(h\right) :=&~ \underset{\eps\leq h}{\sup}~\underset{m\in D_n^K,x\in [-K,K]}{\sup}~~\left|\frac1\eps\left(F\left((1-\eps)m + \eps\delta_x\right) - F(m)\right) - \Delta F(m,x)\right|,\\
\omega_{\partial F}(h) := &  \underset{x\in[-K,K],m,\mu\in D_n^K, W_1(m,\mu)\leq h}{\sup}~~\left|\partial_x\Delta F(m,x) - \partial_x\Delta F(\mu,x)\right|,\\
\omega_{\Delta F}(h) :=&\underset{x\in[-K,K],m,\mu\in D_n^K, W_1(m,\mu)\leq h}{\sup}~~\left|\Delta F(m,x) - \Delta F(\mu,x)\right|.
\end{align*}

%Note that both $\rho_F(h)$ and $\omega_{\partial F}(h)$ vanish as $h$ goes to zero by assumption.

{\it Step~1.} We begin by proving that the three functions above vanish as $h$ goes to zero. For $\rho_F$, it is straightforward by assumption~\eqref{eq:unifDeriv}. For $\omega_{\partial F}$, it is sufficient to note that~$(m,x)\mapsto\partial_x\Delta F(m,x)$ is assumed to be jointly continuous (since $\Delta F$ is $C^{0,1}$) and that $D_n^K\times[-K,K]$ is a compact set (since $D^K_n\subseteq\P([-K,K])$).

Now let us prove that $\omega_{\Delta F}(h)$ vanishes as $h$ goes to zero. Indeed, for any~$\eps>0,$ $x\in\r$ and $m,\mu\in D_n^K$,
\begin{align*} 
\left|\Delta F(m,x) - \Delta F(\mu,x)\right|\leq& \left|\frac1\eps\left(F((1-\eps)m + \eps\delta_x) - F(m)\right) - \Delta F(m,x)\right|\\
&+\left|\frac1\eps\left(F((1-\eps)\mu + \eps\delta_x) - F(\mu)\right) - \Delta F(\mu,x)\right|\\
&+\frac1\eps\left(\left|F(m) - F(\mu)\right| + \left|F((1-\eps)m + \eps\delta_x) - F((1-\eps)\mu + \eps\delta_x)\right|\right),
\end{align*}
implying that, for any $h>0,\eps>0,$
$$\omega_{\Delta F}(h) \leq 2 \rho_F(\eps) + \frac2\eps  L_F h,$$
with $L_F$ a Lipschitz constant for~$F$. So, choosing $\eps = \sqrt{h}$ proves that $\omega_{\Delta F}(h)$ vanishes as $h$ goes to zero.

{\it Step~2.} For $m\in D^K_n$, $d\in\n^*$, $x=(x_1,...,x_d)\in[-K,K]^d$ and $\eps=(\eps_1,...,\eps_d)\in \r_+^d$, let
$$m^x_\eps = \left(1 - \sum_{k=1}^d \eps_k\right) m + \sum_{k=1}^d \eps_k \delta_{x_k}.$$

Let us prove by induction on~$d\in\n^*$ that: for all~$x\in[-K,K]^d,\eps\in\r_+^d,$
\begin{align}
\left|F(m^x_\eps) - F(m) - \int_\r \Delta F(m,y)~d\left(m^x_\eps - m\right)(y)\right| \leq& d \sum_{k=1}^d \eps_k \left(\rho_{F}(2K ||\eps||_\infty)+\omega_{\Delta F}(2K ||\eps||_\infty)\right)\nonumber\\
& + L_F\,2K \sum_{k=1}^d \eps_k \sum_{j=k+1}^d \eps_j.\label{eq:induction}
\end{align}

To prove the case $d=1$, notice that: for $x\in [-K,K],\eps\in\r_+,$
$$\int_\r \Delta F(m,y)~d\left(m^x_\eps - m\right)(y) = \eps \Delta F(m,x) - \int_\r \Delta F(m,y)~dm(y) = \eps \Delta F(m,x),$$

whence
\begin{multline*}
\left|F(m^x_\eps) - F(m) - \int_\r \Delta F(m,y)~d\left(m^x_\eps - m\right)(y)\right|\\
= \eps\left|\frac1\eps\left(F\left((1-\eps)m + \eps\delta_x\right) - F(m)\right) - \Delta F(m,x)\right| \\
\leq \eps~ \rho_{F}(W_1(m,m^x_\eps)) \leq \eps~\rho_{F}(\eps W_1(m,\delta_{x})) \leq\eps~\rho_{F}(2K~\eps)).
\end{multline*}

Let us now prove the induction step. Let $d\geq 2$, $x=(x_1,...,x_d)$ and $\eps = (\eps_1,...,\eps_d)$. Let us recall that we denote
$$x\backslash_1 := (x_2,...,x_d)\in\r^{d-1}\textrm{ and }\eps\backslash_1 := (\eps_2,...,\eps_d)\in\r_+^{d-1}.$$
We have
\begin{align*}
F\left(m^x_\eps\right) - F(m)=& \left[F\left(m^x_\eps\right) - F\left(\left(m^{x_1}_{\eps_1}\right)^{x\backslash_1}_{\eps\backslash_1}\right)\right] + \left[F\left(\left(m^{x_1}_{\eps_1}\right)^{x\backslash_1}_{\eps\backslash_1}\right)- F\left(m^{x_1}_{\eps_1}\right)\right] + \left[ F\left(m^{x_1}_{\eps_1}\right) - F(m)\right]\\
&=: A + B + C.
\end{align*}

Thanks to the case $d=1$, we have
\begin{equation}\label{eq:C}
|C - \eps_1 \Delta F(m,x_1)|\leq \eps_1~\rho_{F}(2K~\eps_1).
\end{equation}

To control~$A$, let us remark that
$$\left(m^{x_1}_{\eps_1}\right)^{x\backslash_1}_{\eps\backslash_1} = m^x_\eps + \eps_1 \sum_{k=2}^d \eps_k\left(m-\delta_{x_k}\right).$$

Then,
\begin{equation}\label{eq:A}
|A|\leq L_F W_1\left(m^x_\eps,\left(m^{x_1}_{\eps_1}\right)^{x\backslash_1}_{\eps\backslash_1}\right) \leq L_F\,\eps_1\sum_{k=2}^d \eps_k W_1(m,\delta_{x_k})\leq L_F\,2K \eps_1\sum_{k=2}^d \eps_k.
\end{equation}

By the induction hypothesis,
$$\left|B - \sum_{k=2}^d \eps_k\Delta F(m^{x_1}_{\eps_1},x_k)\right|\leq (d-1)\sum_{k=2} \eps_k~\left(\rho_{F}(2K~||\eps||_\infty)+\omega_{\Delta F}(2K~||\eps||_\infty)\right) + L_F\,2K \sum_{k=2}^d \eps_k \sum_{j=k+1}^d \eps_j,$$
implying
\begin{align}
\left|B - \sum_{k=2}^d \eps_k\Delta F(m,x_k)\right|\leq&  (d-1)\sum_{k=2} \eps_k \left(\rho_{F}(2K~||\eps||_\infty)+\omega_{\Delta F}(2K~||\eps||_\infty)\right) + L_F\,2K \sum_{k=2}^d \eps_k \sum_{j=k+1}^d \eps_j\nonumber\\
& + \sum_{k=2}^d \eps_k~\omega_{\Delta F}(W_1(m,m^{x_1}_{\eps_1})) \nonumber\\
\leq &d\sum_{k=2}^d \eps_k \left(\rho_{F}(2K~||\eps||_\infty)+\omega_{\Delta F}(2K~||\eps||_\infty)\right) + L_F\,2K \sum_{k=2}^d \eps_k \sum_{j=k+1}^d \eps_j\label{eq:Be}.
\end{align}

Then, combining~\eqref{eq:C}, \eqref{eq:A} and~\eqref{eq:Be} proves the induction step. So~\eqref{eq:induction} is proved for any~$d\in\n^*,$ $x\in[-K,K]^d$ and~$\eps\in\r_+^d$.

{\it Step~3.} Now, we deduce from {\it Step~2} that, for all~$d,K\in\n^*$ and $x\in[-K,K]^d$, for any distinct convex combinations
$$m = \sum_{k=1}^d l_k \delta_{x_k}\textrm{ and }\mu = \sum_{k=1}^d \lambda_k \delta_{x_k},$$
we have
\begin{align}
&\left|F(m) - F(\mu) - \int_\r \Delta F(\mu,y)~d(m-\mu)(y)\right|\label{eq:step2}\\
& \leq C_{d} ||l-\lambda||_1\left(L_F\,2K||l-\lambda||_1 + \rho_{F}\left(2K||l-\lambda||_1\right) + \omega_{\Delta F}\left(2K||l-\lambda||_1 + \omega_{\partial F}(2K ||l-\lambda||_1)\right)\right).\nonumber
\end{align}

Let us define
$$S := \sum_{j=1}^d l_j\wedge\lambda_j~~;~~\tilde m := \sum_{k=1}^d \frac{l_k\wedge \lambda_k}{S}\delta_{x_k},$$
and
\begin{equation*}
\eps_k := l_k-\frac{1-||l-\lambda||_1}{S} l_k\wedge\lambda_k~~;~~\eta_k := \lambda_k-\frac{1-||l-\lambda||_1}{S} l_k\wedge\lambda_k,
\end{equation*}
with $a\wedge b := \min(a,b)$ for $a,b\in\r$.

Notice that, since
$$\lambda_k\wedge l_k = l_k - \max((l_k- \lambda_k),0),$$
we have that
$$S = 1 - \sum_{k=1}^d \max((l_k - \lambda_k),0) \geq 1 - ||l-\lambda||_1,$$
implying that, for all~$1\leq k\leq d,$
$$\eps_k =  l_k-\frac{1-||l-\lambda||_1}{S} l_k\wedge\lambda_k\geq l_k - l_k\wedge\lambda_k\geq 0.$$

And, with the same reasoning, for each~$1\leq k\leq d$, $\eta_k\geq 0$.

Let us also remark that
\begin{equation}\label{eq:DD}
||\eps||_1 = 1 - \frac{1-||l-\lambda||_1}{S}S = ||l-\lambda||_1 = ||\eta||_1.
\end{equation}

And, by definition of $\eps,\eta,\tilde m$, we have
$$m = \left(1-\sum_{k=1}^d \eps_k\right)\tilde m + \sum_{k=1}^d \eps_k\delta_{x_k}\textrm{ and }\mu = \left(1-\sum_{k=1}^d \eta_k\right)\tilde m + \sum_{k=1}^d \eta_k\delta_{x_k}.$$

Besides,
\begin{align}
&\left|F(m) - F(\mu) - \int_\r \Delta F(\mu,y)~d(m-\mu)(y)\right|\leq \left|F(m) - F(\tilde m) - \int_\r \Delta F(\tilde m,y)~d(m-\tilde m)(y)\right|\nonumber\\
&~~+ \left|F(\mu) - F(\tilde m) - \int_\r \Delta F(\tilde m,y)~d(\mu-\tilde m)(y)\right|+\left|\int_\r\left(\Delta F(\tilde m,y) - \Delta F(\mu,y)\right)~d(m-\mu)(y)\right|\label{eq:step2bis}
\end{align}

Then, by {\it Step~2},
\begin{align*}
\left|F(m) - F(\tilde m) - \int_\r \Delta F(\tilde m,y)~d(m-\tilde m)(y)\right| &\leq d ||\eps||_1 \left( \rho_{F}(2K||\eps||_1)+ \omega_{\Delta F}(2K||\eps||_1) + L_F\,2K||\eps||_1\right),\\
 \left|F(\mu) - F(\tilde m) - \int_\r \Delta F(\tilde m,y)~d(\mu-\tilde m)(y)\right| &\leq d ||\eta||_1 \left( \rho_{F}(2K||\eta||_1)+ \omega_{\Delta F}(2K||\eta||_1) + L_F\,2K||\eta||_1\right),
\end{align*}
which allows to control the two first terms of the RHS of~\eqref{eq:step2bis}. By the uniform continuity of $(m',y)\in D^K_n\times[-K,K]\mapsto \partial_y \Delta F(m',y)$, the third term is non-greater than
$$\omega_{\partial F} \left(W_1(\tilde m,\mu)\right) W_1(m,\mu) \leq \omega_{\partial F}\left(2K ||\eta||_1\right)\left(K ||l-\lambda||_1\right).$$

Recalling~\eqref{eq:DD},
$$||\eps||_1=||l-\lambda||_1=|\eta||_1,$$
the inequality~\eqref{eq:step2} is proved.

{\it Step~4.} In this last step, let us fix two distinct convex combinations
$$m = \sum_{k=-n K}^{n K} l_k \delta_{k/n}\textrm{ and }\mu = \sum_{k=-n K}^{n K} \lambda_k \delta_{k/n},$$
and prove that
\begin{equation}\label{eq:step3}
F(m) - F(\mu) = \int_0^1\int_\r \Delta F((1-t)\mu + tm,y)~d(m-\mu)(y)~dt.
\end{equation}

Let us define
$$f : t\in[0,1]\longmapsto F((1-t)\mu + t\,m),$$
and prove that $f$ is differentiable with, for $t\in ]0,1[$,
$$f'(t) = \int_\r  \Delta F((1-t)\mu + t\,m,y)~d(m-\mu)(y).$$

Let $t_0\in]0,1[$. For $h\in\r$ small enough, let
$$R_h := \left|f(t_0+h) - f(t_0) - h\int_\r \Delta F((1-t_0)\mu + t_0 m,y)~d(m-\mu)(y)\right|.$$

We write
$$(1-t_0)\mu + t_0 m = \sum_{k=-nK}^{nK} \left((1 - t_0)\lambda_k + t_0 l_k\right)\delta_{k/n} =:  \sum_{k=-nK}^{nK} \tilde \lambda_k\delta_{k/n},$$
and
$$(1-t_0-h)\mu + (t_0+h) m = \sum_{k=-nK}^{nK} \left((1 - t_0-h)\lambda_k + (t_0+h) l_k\right)\delta_{k/n} =:  \sum_{k=-nK}^{nK} \tilde l_k^h\delta_{k/n}.$$

By {\it Step~3},
\begin{align}\label{eq:Rh}
|R_h|\leq & C_{K,n} ||\tilde l^h-\tilde \lambda||_1\\
&\left(L_F\,2K ||\tilde l^h-\tilde \lambda||_1+\omega_{\Delta F}\left(2K ||\tilde l^h-\tilde \lambda||_1\right) +\rho_{F}\left(2K ||\tilde l^h-\tilde \lambda||_1\right)  +\omega_{\partial F}\left(2K ||\tilde l^h-\tilde \lambda||_1\right) \right).\nonumber
\end{align}

Let us note that, for all~$-nK\leq k\leq nK,$
$$\tilde l^h_k = \tilde \lambda_k + h(l_k-\lambda_k).$$

Hence, we deduce that
$$||\tilde l^h - \tilde \lambda||_1 = h||l - \lambda||_1.$$

Using this last equation in~\eqref{eq:Rh} proves that $R_h/h$ vanishes as $h$ goes to zero. So, $f$ is differentiable at any $t_0\in]0,1[$, and
$$f'(t_0) = \int_\r  \Delta F((1-t_0)\mu + t_0m,y)~d(m-\mu)(y).$$

Then, recall that $(m',y)\mapsto \Delta F(m',y)$ has been shown to be uniformly continuous on the relatively compact set $D^K_n\times [-K,K]$ at {\it Step~1}. This implies that the function $f$ is $C^1$, and so, by the (classical) second fundamental theorem of calculus,
$$f(1) - f(0) = \int_0^1 f'(t)~dt,$$
which is exactly~\eqref{eq:step3}. This ends the proof of the lemma.
\end{proof}

\subsection{End of the proof of Theorem~\ref{prop:criteria}}

Let us begin by stating and proving the following lemma. Note that the proof of the lemma uses similar arguments as some used in the proof of Lemma~$B.1$ of \cite{cox_controlled_2024}.

\begin{lem}\label{lem:PhiCont}
Let $G:\P_1(\r)\times\r\to\r$ be $C^{0,1}$, then the function
$$\Phi:(m,\mu)\in\P_1(\r)^2\longmapsto \int_\r G((1-t)\mu+t\,m,x)~d(m-\mu)(x)~dt$$
is jointly continuous.
\end{lem}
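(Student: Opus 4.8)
The plan is to prove sequential joint continuity. Fix sequences $(m_n,\mu_n)\to(m,\mu)$ in $\P_1(\r)^2$ and set $\nu_t:=(1-t)\mu+t\,m$ and $\nu^n_t:=(1-t)\mu_n+t\,m_n$; then $W_1(\nu^n_t,\nu_t)\le(1-t)W_1(\mu_n,\mu)+t\,W_1(m_n,m)\to0$ uniformly in $t\in[0,1]$. I first record two consequences of $W_1$-convergence. Since $\int_\r|x|\,dm_n=W_1(m_n,\delta_0)$ and $W_1$ is a metric, the first moments of $m_n$ and $\mu_n$ converge, hence are bounded, and by the standard characterization of $W_1$-convergence the family $\{|x|\}$ is uniformly integrable along $\{m_n\}$ and $\{\mu_n\}$, i.e. $\sup_n\int_{|x|>R}|x|\,d(m_n+\mu_n)(x)\to0$ as $R\to\infty$. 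Moreover the set
$$\K:=\left\{(1-t)\beta+t\,\alpha:\ t\in[0,1],\ \alpha\in\{m\}\cup\{m_n\}_n,\ \beta\in\{\mu\}\cup\{\mu_n\}_n\right\}$$
is compact in $\P_1(\r)$, being the continuous image of a compact set, and it contains every $\nu_t$ and $\nu^n_t$. Finally, as in the proof of Lemma~\ref{lem:UI}, the sublinearity of $G$ in $x$, uniform over the compact $\K$, yields a constant $C>0$ with $|G(\rho,x)|\le C(1+|x|)$ for all $\rho\in\K$ and $x\in\r$.

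By dominated convergence in the variable $t$ it suffices to prove that, for each fixed $t$, the inner integral $\int_\r G(\nu^n_t,x)\,d(m_n-\mu_n)(x)$ converges to $\int_\r G(\nu_t,x)\,d(m-\mu)(x)$ as $n\to\infty$. The required domination holds because
$$\left|\int_\r G(\nu^n_t,x)\,d(m_n-\mu_n)(x)\right|\le\int_\r C(1+|x|)\,d(m_n+\mu_n)(x),$$
which is bounded uniformly in $n$ and $t$ since the first moments of $m_n,\mu_n$ are bounded.

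For the pointwise convergence, fix $t$ and decompose the difference of the inner integrals as
\begin{align*}
&\int_\r\big[G(\nu^n_t,x)-G(\nu_t,x)\big]\,d(m_n-\mu_n)(x)\\
&\qquad+\int_\r G(\nu_t,x)\,d\big[(m_n-\mu_n)-(m-\mu)\big](x)=:(I_n)+(II_n).
\end{align*}
In $(II_n)$ the integrand $G(\nu_t,\cdot)$ is a fixed continuous function with at most linear growth; using a continuous cut-off, the Portmanteau theorem on the bounded part, and the uniform integrability of $\{|x|\}$ along $\{m_n\},\{\mu_n\}$ on the tails, one gets $\int_\r G(\nu_t,\cdot)\,dm_n\to\int_\r G(\nu_t,\cdot)\,dm$ and likewise for $\mu$, so $(II_n)\to0$. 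For $(I_n)$, bound $|(I_n)|\le\int_\r|G(\nu^n_t,x)-G(\nu_t,x)|\,d(m_n+\mu_n)(x)$ and split the domain at $|x|=R$: on $\{|x|>R\}$ the integrand is $\le2C(1+|x|)$, so this part is controlled uniformly in $n$ by the tail uniform integrability; on $\{|x|\le R\}$ it is at most $2\sup_{|x|\le R}|G(\nu^n_t,x)-G(\nu_t,x)|$, which tends to $0$ because $G$ is uniformly continuous on the compact $\K\times[-R,R]$ and $W_1(\nu^n_t,\nu_t)\to0$. Choosing $R$ large (to control the tail uniformly in $n$) and then letting $n\to\infty$ (for the bulk) shows $(I_n)\to0$.

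The joint continuity of $G$ invoked above follows exactly as in Lemma~\ref{lem:UC}: from $G(\rho,x)=G(\rho,0)+\int_0^x\partial_y G(\rho,y)\,dy$, the continuity of $\rho\mapsto G(\rho,0)$, the joint continuity of $\partial_x G$, and its local boundedness, and compactness of $\K\times[-R,R]$ upgrades continuity to uniform continuity. The main obstacle is precisely that the interpolating measures $\nu^n_t,\nu_t$ need not be compactly supported, so Lemmas~\ref{lem:UC} and~\ref{lem:UI} do not apply verbatim on $\P([-K,K])$; the tails at infinity must instead be absorbed uniformly in $n$ through the sublinear growth of $G$ together with the uniform integrability of the first moments guaranteed by $W_1$-convergence, while the bulk $|x|\le R$ is handled by uniform continuity of $G$ on the compact $\K\times[-R,R]$.
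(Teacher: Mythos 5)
Your proof is correct, and its skeleton matches the paper's: reduce to sequential continuity, form a compact set $\K$ containing all interpolation segments, extract a sublinear bound $|G(\rho,x)|\le C(1+|x|)$ for $\rho\in\K$, and split integrals into a tail (controlled uniformly in $n$) and a bulk. Within that skeleton, however, your decomposition is the ``transpose'' of the paper's and the tools differ. The paper writes $\Phi(m,\mu)-\Phi(m_n,\mu_n)$ as [the $G$-difference integrated against the \emph{limit} measures $m,\mu$] plus [$G((1-t)\mu_n+t\,m_n,\cdot)$ integrated against the \emph{measure} differences], handling the first by Vitali's theorem (via Lemma~\ref{lem:UI}) and the second by Kantorovich--Rubinstein duality with the bound $\sup|\partial_x G|$ on $\K\times[-K,K]$, keeping all estimates uniform in $t$. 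You instead pair the $G$-difference with the varying measures $m_n,\mu_n$ (your $(I_n)$, controlled by uniform continuity on $\K\times[-R,R]$ plus tail uniform integrability) and the measure differences with the \emph{fixed} function $G(\nu_t,\cdot)$ (your $(II_n)$, controlled by the standard fact that $W_1$-convergence yields convergence of integrals of continuous functions of linear growth), then integrate in $t$ by dominated convergence. Your pairing never needs Kantorovich duality or a Lipschitz-in-$x$ bound for the difference term, which incidentally sidesteps a delicate point in the paper (there duality is applied to an integral restricted to $[-K,K]$, where the test function is no longer globally Lipschitz); the price is that you outsource to the ``standard characterization of $W_1$-convergence'' the uniform tail estimate $\sup_n\int_{\{|x|>R\}}|x|\,d(m_n+\mu_n)(x)\to 0$, which the paper proves by hand (its Step~2). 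Two small points: the compactness of $\K$ as a ``continuous image of a compact set'' deserves one line, namely the continuity of $(t,\alpha,\beta)\mapsto(1-t)\beta+t\,\alpha$ in $W_1$, which uses boundedness of first moments on compacts; and your closing remark is inaccurate in one respect, since Lemma~\ref{lem:UI} is stated for an arbitrary compact subset of $\P_1(\r)$ and hence applies verbatim to $\K$ --- only Lemma~\ref{lem:UC} requires the (routine) extension from $\P([-K,K])$ to a general compact, which you correctly supply.
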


\begin{proof}
Let us consider converging sequences in $\mathcal{P}_1(\r)$ indexed on~$\n^*$,
$$m_n\underset{n\rightarrow\infty}{\longrightarrow}m\textrm{ and }\mu_n\underset{n\rightarrow\infty}{\longrightarrow}\mu,$$
and define
$$\mathcal{K} := \{(1-t)\mu +t\,m:t\in[0,1]\}\cup\bigcup_{n\in\n^*}\left\{(1-t)\mu_n +t\,m_n:t\in[0,1]\right\}$$

{\it Step~1.}  Let us show that $\mathcal{K}$ is a compact set. Let $\nu_k$ belongs to $\mathcal{K}$ ($k\in\n$), meaning: for all~$k\in\n$, there exist~$n_k\in\n,t_k\in[0,1]$ such that
$$\nu_k = (1-t_k)\mu_{n_k} + t_k m_{n_k},$$
with the convention $\mu_0 := \mu$ and $m_0:=m$. Since $[0,1]$ is compact, the sequence $(t_k)_k$ has a converging subsequence. For simplicity, let us assume that $t_k$ converges to some~$t\in[0,1]$. Then $(n_k)_k$ being an $\n-$valued sequence, it is either bounded (hence it admits a constant subsequence) or it has a subsequence going to infinity (hence, along this subsequence, $(m_{n_k},\nu_{n_k})$ converges to $(m,\mu)=(m_0,\mu_0)$). In both cases, there is a subsequence of $n_k$ such that, along it, $(m_{n_k},\nu_{n_k})$ converges to some $(m_N,\mu_N)$ with some~$N\in\n$. Then, if we do not write the subsequence for simplicity, we have
%$$W_1(\nu_k,(1-t)\mu_N +tm_N)\leq |t-t_k|\left(W_1(\mu_{n_k},\mu_N) + W_1(m_{n_k},m_N)\right),$$
\begin{align*}
W_1(\nu_k,(1-t)\mu_N +t\,m_N)\leq& W_1(\nu_k,(1-t_k)\mu_N+t_km_N) \\
&+ W_1((1-t_k)\mu_N+t_km_N,(1-t)\mu_N+t\,m_N)\\
\leq& t_k \left(W_1(\mu_{n_k},\mu_N)+W_1(m_{n_k},m_N)\right) + |t-t_k|\int_\r|x|d(m_N+\mu_N)(x),
\end{align*}
which proves that, as $k$ goes to infinity, $\nu_k$ converges to $(1-t)\mu_N+t\,m_N$ which belongs to~$\mathcal{K}$. Whence $\mathcal{K}$ is compact.

{\it Step~2.} Now we prove that
$$\underset{n}{\sup}\int_\r (1+|x|)\uno{|x|>K}d(m_n+m)(x)\underset{K\to\infty}{\longrightarrow}0.$$

For $K>0$, let $\phi_K:\r\to[0,1]$ be continuous, $[-K,K]$-supported, and such that, for all~$x\in\r$ $\phi_K(x)$ converges to one as $K$ goes to infinity. Then, for any~$n$,
\begin{align}
\int_\r (1+|x|)\uno{|x|>K}d(m_n+m)(x)\leq& \int_\r (1+|x|)(1-\phi_K(x))d(m_n+m)(x)\nonumber\\
\leq& 2\int_\r (1+|x|)(1-\phi_K(x))dm(x)\label{eq:phiK1}\\
&+\left|\int_\r (1+|x|)(1-\phi_K(x))d(m_n-m)(x)\right|.\label{eq:phiK2}
\end{align}

So, for any $\eps>0$, let us fix $K_\eps>0$ such that~\eqref{eq:phiK1} is non-greater than~$\eps$ (which is possible by the dominated convergence theorem). Then, we can rewrite~\eqref{eq:phiK2} as
$$\left|\int_\r (1+|x|)\phi_{K_\eps}(x)d(m_n-m)(x)\right|$$
 and it is possible to fix some~$n_\eps$ (depending on~$K_\eps$) such that the quantity above is also non-greater than~$\eps$ (since the integrand is continuous and bounded) for any $n\geq n_\eps$.  Then, by the dominated convergence theorem, it is possible to consider some $\tilde K_\eps>K_\eps$ such that, for all~$n\leq n_\eps,$
 $$\int_\r (1+|x|)\uno{|x|>\tilde K_\eps}d(m_n+m)(x)\leq \eps.$$
 Combining the previous controls implies the result.

{\it Step~3.} Let us end the proof by showing
$$\Phi(m_n,\mu_n) \underset{n\to\infty}{\longrightarrow}\Phi(m,\mu).$$

Let us define
$$\mathcal{F} := \{x\in\r\to |G(\nu,x)|:\nu\in\mathcal{K}\}.$$
Since $G$ is $C^{0,1}$ and $\mathcal{K}$ is compact, there exists~$C>0$ such that, for all~$f\in\mathcal{F}$ and $x\in\r$, 
$$f(x)\leq C(1+|x|).$$

In particular, for any~$K>0$,
\begin{align}
\left|\Phi(m,\mu) - \Phi(m_n,\mu_n)\right|\leq& \,\underset{\nu\in \mathcal{K}}{\sup}\int_0^1\int_{[-K,K]^c} |G(\nu,x)| d(m+m_n+\mu+\mu_n)(x)dt\label{eq:1111}\\
&+\int_0^1\int_{[-K,K]} \left|G((1-t)\mu+t\,m,x) - G((1-t)\mu_n+t\,m_n,x)\right|d(m+\mu)(x)dt\label{eq:2222}\\
&+\int_0^1\left|\int_{[-K,K]}G((1-t)\mu_n+t\,m_n,x) d(m-m_n+\mu_n-\mu)(x)\right|dt.\label{eq:3333}
\end{align}

Let us fix some~$\eps>0.$ Since the term at~\eqref{eq:1111} is non-greater than
$$C\int_0^1\int_{[-K,K]^c} (1+|x|) d(m+m_n+\mu+\mu_n)(x)dt,$$
it is possible (by {\it Step~2}) to choose~$K_\eps>0$ such that the term at~\eqref{eq:1111} is smaller than~$\eps$. Then, fixing $K=K_\eps$, the term at~\eqref{eq:2222} vanishes as $n$ goes to infinity by Vitali's convergence theorem using Lemma~\ref{lem:UI}, and hence is smaller than $\eps$ for $n$ large enough (depending on~$K_\eps$). By Kantorovich-Rubinstein duality, the last term~\eqref{eq:3333} is non-greater than
$$\underset{(\nu,x)\in\mathcal{K}\times[-K_\eps,K_\eps]}{\sup}\left|\partial_x G(\nu,x)\right|\left(W_1(m,m_n)+W_1(\mu,\mu_n)\right),$$
which vanishes as $n$ goes to infinity, recalling that $G$ is $C^{0,1}$ and so $\partial_x G$ is bounded on the compact sets. This finally proves that $\Phi$ is jointly continuous.
\end{proof}

Let us finally end this section with the
\begin{proof}[Proof of Theorem~\ref{prop:criteria}]
{\it Step~1.} Let $m,\mu\in\P_1(\r)$ be compactly supported, and let us prove that
\begin{align*}
F(m) - F(\mu) =& \int_0^1\int_\r H((1-t)\mu+t\,m,x)~d(m-\mu)(x)~dt.
\end{align*}

Let us use the notation $m^{[n]},\mu^{[n]}$ introduced at Section~\ref{sec:approx}. Thanks to Lemma~\ref{lem:diffDn}, we know that, for all~$n\in\n^*,$
$$F(m^{[n]}) - F(\mu^{[n]}) = \int_\r H ((1-t)\mu^{[n]}+t\,m^{[n]},x)~d(m^{[n]}-\mu^{[n]})(x)~dt.$$
And, by Lemma~\ref{lem:PhiCont} and Proposition~\ref{prop:mncv}, it is possible to let $n$ goes to infinity to prove the result.

{\it Step~2.} Let $m,\mu$ be any measures in~$\P_1(\r)$. For~$K\in\n^*$ (large enough), let us define
$$m^{(K)} := \frac1{m([-K,K])} m_{|[-K,K]}\textrm{ and }\mu^{(K)} := \frac1{\mu([-K,K])} \mu_{|[-K,K]}.$$

Firstly, let us control Wasserstein distance between $m$ and $m^{(K)}.$ For any Lipschitz continuous function~$\phi:\r\rightarrow\r$ with Lipschitz constant non-greater than one satisfying $\phi(0)=0$,
\begin{align*}
\left|\int_\r \phi(x)~d\left(m-m^{(K)}\right)(x)\right| \leq& \int_{[-K,K]} \left|1 - \frac1{m([-K,K])}\right|\cdot|\phi(x)|~dm(x) + \int_{\r\backslash[-K,K]} |\phi(x)|dm(x)\\
\leq& \left|1 - \frac1{m([-K,K])}\right| \int_\r |x|~dm(x) + \int_{\r\backslash[-K,K]} |x|~dm(x).
\end{align*}

Hence
\begin{equation}\label{eq:w1mk}
W_1\left(m,m^{(K)}\right) \underset{K\rightarrow\infty}{\longrightarrow}0\textrm{ and }W_1\left(\mu,\mu^{(K)}\right) \underset{K\rightarrow\infty}{\longrightarrow}0.
\end{equation}

Then, by {\it Step~1}, for all~$K\in\n^*$ (large enough),
$$F(m^{(K)}) - F(\mu^{(K)}) = \int_\r H((1-t)\mu^{(K)}+t\,m^{(K)},x)~d(m^{(K)}-\mu^{(K)})(x)~dt.$$

Finally, by Lemma~\ref{lem:PhiCont} and~\eqref{eq:w1mk},
$$F(m) - F(\mu) = \int_\r H ((1-t)\mu+t\,m,x)~d(m-\mu)(x)~dt,$$
which ends the proof of Theorem~\ref{prop:criteria}.
\end{proof}

\section{Proof of Theorem~\ref{thm:TFA}}\label{sec:proofTFA}

It is sufficient to prove that the function~$F$ defined as
$$F:m\in \P_1(\r)\longmapsto \int_0^1\int_\r H\left((1-t)\delta_0 + t\,m,x\right)d(m-\delta_0)(x)dt$$
satisfies the assumptions of Theorem~\ref{prop:criteria}, where the function~$H$ in the statements of both Theorems~\ref{thm:TFA} and~\ref{prop:criteria} is the same.

%meaning
%\begin{equation}\label{eq:FH}
%\frac1\eps\left[F((1-\eps) m+\eps\delta_x) - F(m)\right]\underset{\eps\rightarrow 0}{\longrightarrow} H(m,x).
%\end{equation}

In the statement of both Theorems, the function~$H$ is assumed to be $C^{0,1}$ on $\P_1(\r)$ and to satisfy
$$\int_\r H(m,x)dm(x)=0$$
for all compactly supported measure~$m$. %In addition, by Lemma~\ref{lem:PhiCont}, the function $F$ is continuous. 
In addition, since $H$ is $C^{0,1}$, the function $(m,x)\mapsto \partial_x H(m,x)$ is continuous and hence bounded on the compact set $\mathcal{P}([-K,K])\times[-K,K]$ (for any~$K>0$), and so, by Kantorovich-Rubinstein's duality, for all~$m,\mu\in\mathcal{P}([-K,K]),$
$$\left|F(m) - F(\mu)\right|\leq \underset{\nu\in\P([-K,K]),x\in[-K,K]}{\sup}\left|\partial_x H(\nu,x)\right|\, W_1(m,\mu).$$

So the last property left to be proved is~$(ii)$ of Theorem~\ref{prop:criteria}: for all~$K>0$,
\begin{equation}\label{eq:FH}
\underset{(m,x)\in\P([-K,K])\times[-K,K]}{\sup}\left|\frac1\eps\left[F((1-\eps) m+\eps\delta_x) - F(m)\right]-H(m,x)\right|\underset{\eps\rightarrow 0}{\longrightarrow} 0.
\end{equation}
 
 Let us fix $K>0$, $m\in\P([-K,K])$,~$x\in [-K,K]$, and recall the notation
$$m^x_\eps := (1-\eps)m+\eps\delta_x.$$

We have
\begin{align*}
F(m^x_\eps)=& \int_0^1\int_\r H(t m^x_\eps + (1-t)\delta_0,y)\,d\left((1-\eps)m + \eps\delta_x - \delta_0\right)(y)\,dt\\
=& \int_0^1\int_\r H(t m^x_\eps + (1-t)\delta_0,y)\,d\left[(1-\eps)(m-\delta_0) + \eps(\delta_x - \delta_0)\right](y)\,dt,
\end{align*}
hence
\begin{align*}
\frac{F(m^x_\eps) - F(m)}{\eps}=&\frac{1-\eps}{\eps}\int_0^1\int_\r \left[H(t m^x_\eps + (1-t)\delta_0,y)-H(t m + (1-t)\delta_0,y)\right]\, d(m-\delta_0)(y)\,dt\\
&+\int_0^1\bigg[H(t m^x_\eps + (1-t)\delta_0,x) - H(t m^x_\eps + (1-t)\delta_0,0)\bigg.\\
&\hspace*{2.2cm}\left. + H(t m + (1-t)\delta_0,0) - \int_\r H(t m + (1-t)\delta_0,y)\,dm(y)\right]dt\\
&=: A +B.
\end{align*}

Since $H$ is continuous, it is uniformly continuous on the compact $\P([-K,K])\times[-K,K],$ so
\begin{multline*}
\left|H(t\,m^x_\eps+(1-t)\delta_0,x) - H(t\,m+(1-t)\delta_0,x)\right|\\
\leq \omega_H\left(W_1(t\,m^x_\eps+(1-t)\delta_0,t\,m+(1-t)\delta_0)\right) \leq \omega_H\left(\eps W_1(m,\delta_0)\right)\leq \omega_H(\eps\,K),
\end{multline*}
which vanishes as $\eps$ goes to zero, uniformly w.r.t.~$(m,x)\in\P([-K,K])\times[-K,K].$ This implies that, the following convergence holds true uniformly w.r.t. $(m,x)\in\P([-K,K])\times[-K,K]$:
\begin{equation}\label{eq:B}
B\underset{\eps\rightarrow 0}{\longrightarrow}\int_0^1 H(t\,m+(1-t)\delta_0,x)\,dt - \int_0^1\int_\r H(t\,m+(1-t)\delta_0,y)dm(y)dt.
\end{equation}

Now let us control the term~$A$, by noticing that
$$t\,m^x_\eps + (1-t)\delta_0 = \left(t\,m + (1-t)\delta_0\right)^x_\eps + \eps (1-t) (\delta_0 - \delta_x),$$
 and writing
$$A = A_1 + A_2,$$
with
\begin{align*}
A_1 :=& (1-\eps)\int_0^1\int_\r \frac1\eps\big[H(\left(t\,m + (1-t)\delta_0\right)^x_\eps + \eps (1-t) (\delta_0 - \delta_x),y)\\
&\hspace*{5.5cm} - H(\left(t\,m + (1-t)\delta_0\right)^x_\eps),y)\big]\, d(m-\delta_0)(y)\,dt,\\
A_2 :=& (1-\eps)\int_0^1\int_\r \frac1\eps\left[H(\left(t\,m + (1-t)\delta_0\right)^x_\eps,y) - H(t\,m + (1-t)\delta_0,y)\right]\, d(m-\delta_0)(y)\,dt.
\end{align*}

Since $H$ is $C^{1,1}$ on $\mathcal{P}([-K,K])\times[-K,K]$, by Lemma~\ref{lem:UC2}, the function $(\nu,y,z)\mapsto \delta H_y(\nu,z)$ is uniformly continuous on the compact set $\mathcal{P}([-K,K])\times[-K,K]^2$, hence denoting~$\omega_{\delta H}$ the continuity modulus of this function w.r.t. the measure-variable uniformly w.r.t. the real-variable, we have
\begin{align*}
&\left|\frac1\eps\left(H(\mu^x_\eps,y) - H(\mu,y)\right) - \delta H_y(\mu,x)\right|\\
&=\left|\int_0^1\int_\r \delta H_y((1-s\eps)\mu + s\eps\delta_x,z) d(\delta_x -\mu)(z)ds - \delta H_y(\mu,x) + \int_\r \delta H_y(\mu,z)d\mu(z)\right| \\
&\leq \int_0^1\int_\r\left| \delta H_y((1-s\eps)\mu + s\eps\delta_x,z) - \delta H_y(\mu,z)\right|d(m+\delta_x)(z)ds\\
&\leq 2\int_0^1\omega_{\delta H}\left(W_1((1-s\eps)\mu+s\eps\delta_x,\mu)\right)ds\leq 2 \int_0^1\omega_{\delta H}\left(s\eps W_1(\mu,\delta_x)\right)ds\\
&\leq 2\,\omega_{\delta H}(\eps W_1(\mu,\delta_x)) \leq 2\,\omega_{\delta H}(2K\,\eps).
\end{align*}
In particular,
\begin{align}
A_2\underset{\eps\rightarrow 0}{\longrightarrow}& \int_0^1 \int_\r \delta H_y (tm + (1-t)\delta_0,x)\, d(m-\delta_0)(y)\,dt\label{eq:A2}\\
=& \int_0^1 \int_\r \delta H_x (t\,m + (1-t)\delta_0,y)\, d(m-\delta_0)(y)\,dt\nonumber\\
&+\int_0^1 \int_\r H(tm+(1-t)\delta_0,y)dm(y)dt-\int_0^1 H(t\,m+(1-t)\delta_0,0)dt\nonumber,
\end{align}
where the convergence is uniform w.r.t.~$(m,x)$ belonging to $\P([-K,K])\times[-K,K]$.

To handle~$A_1,$ let us remark that
\begin{multline*}
H(\mu + \eps(1-t)(\delta_0-\delta_x),y) - H(\mu,y)\\
=\eps(1-t)\int_0^1\int_\r \delta H_y(\mu + \eps(1-t)(1-s)(\delta_0-\delta_x),z)d(\delta_0-\delta_x)(z)ds,
\end{multline*}
hence
\begin{multline*}
A_1=(1-\eps)\int_0^1(1-t)\int_\r \int_0^1\int_\r\\
 \delta H_y\left((t\,m+(1-t)\delta_0)^x_\eps+ \eps(1-t)(1-s)(\delta_0-\delta_x),z \right)d(\delta_0-\delta_x)(z)ds\, d(m-\delta_0)(y)dt.
\end{multline*}

So with the same reasoning as the one used to prove the convergence~\eqref{eq:B} (applied to the function~$(\tilde m,y,z)\mapsto \delta H_y(\tilde m,z)$ instead of $(\tilde m,y)\mapsto H(\tilde m,y)$, which is uniformly continuous by Lemma~\ref{lem:UC2}) we obtain the following convergence (and this convergence is uniform w.r.t.~$(m,x)$ in $\mathcal{P}([-K,K])\times[-K,K]$)
\begin{align}
A_1\underset{\eps\rightarrow 0}{\longrightarrow}& \int_0^1 (1-t)\int_\r \int_\r \delta H_y(t\,m + (1-t)\delta_0,z)\, d(\delta_0 - \delta_x)(z)\,d(m-\delta_0)(y)\,dt\nonumber\\
=&\int_0^1 (1-t)\int_\r \int_\r \delta H_z(t\,m + (1-t)\delta_0,y)\, d(\delta_0 - \delta_x)(z)\,d(m-\delta_0)(y)\,dt\nonumber\\
=& \int_0^1 (1-t)\int_\r \int_\r \delta H_z(t\,m + (1-t)\delta_0,y)\, d(m-\delta_0)(y)\, d(\delta_0 - \delta_x)(z)\,dt\nonumber\\
=&\int_0^1 \int_\r \int_\r \delta H_z(t\,m + (1-t)\delta_0,y)\, d\left[m - (t\,m + (1-t)\delta_0)\right](y)\, d(\delta_0 - \delta_x)(z)\,dt\nonumber\\
=& \int_0^1 \int_\r\int_\r \delta H_z(t\,m + (1-t)\delta_0,y)\, dm(y)\,d(\delta_0 - \delta_x)(z)\nonumber\\
=&  -\int_0^1 \int_\r \delta H_x(t\,m + (1-t)\delta_0,y)\, dm(y)\,dt +\int_0^1\int_\r \delta H_0(t\,m +(1-t)\delta_0,y)\,dm(y)\,dt,\label{eq:A1}
\end{align}
where we have used~$(iii)$ to obtain the first equality above.

Then, by~\eqref{eq:B},~\eqref{eq:A2} and~\eqref{eq:A1}, the function~$F$ is uniformly Dawson-differentiable at~$m$ on $\mathcal{P}([-K,K])\times[-K,K]$ with
\begin{align}
\Delta F(m,x) = &\int_0^1 H(t\,m+(1-t)\delta_0,x)\,dt - \int_0^1\int_\r H(t\,m+(1-t)\delta_0,y)dm(y)dt \nonumber\\
&+\int_0^1 \int_\r \delta H_x (t\,m + (1-t)\delta_0,y)\, d(m-\delta_0)(y)\,dt\nonumber\\
&+\int_0^1 \int_\r H(t\,m+(1-t)\delta_0,y)dm(y)dt-\int_0^1 H(t\,m+(1-t)\delta_0,0)dt\nonumber\\
& -\int_0^1 \int_\r \delta H_x(t\,m + (1-t)\delta_0,y)\, dm(y)\,dt +\int_0^1\int_\r \delta H_0(t\,m +(1-t)\delta_0,y)\,dm(y)\,dt\nonumber\\
=&  \int_0^1 H_x(t\,m + (1-t)\delta_0)\,dt - \int_0^1 \delta H_x(t\,m + (1-t)\delta_0,0)\,dt\label{eq:fin1}\\
&-\int_0^1 H_0(t\,m+(1-t)\delta_0)dt + \int_0^1\int_\r \delta H_0(t\,m+(1-t)\delta_0,y)dm(y)dt.\label{eq:fin2}
\end{align}

To end the proof, let us show that the quantity at the line~\eqref{eq:fin1} is $H(m,x)$ and the one at~$\eqref{eq:fin2}$ is zero. We have
\begin{align*}
H_x(t\,m + (1-t)\delta_0) - H_x(m) =& \int_0^1\int_\r \delta H_x((1-s)m + st\, m + s(1-t)\, \delta_0,y) (1-t)\, d(\delta_0-m)(y)\, ds\\
=&\int_0^1\int_\r \delta H_x((1-s(1-t))m + s(1-t)\, \delta_0,y) (1-t)\, d(\delta_0-m)(y)\, ds\\
=&\int_0^{1-t}\int_\r \delta H_x((1-r)m + r\,\delta_0,y)\, d(\delta_0-m)(y)\, dr\\
=&\int_t^1 \int_\r \delta H_x(s\, m + (1-s)\, \delta_0,y)d(\delta_0-m)(y)\, ds.
\end{align*}

Since, by Kantorovich-Rubinstein's duality,
\begin{multline*}
\int_0^1\int_0^1\left|\delta H_x(s\,m +(1-s)\delta_0,y)d(\delta_0-m)(y)\right|ds\,dt \\
\leq \left(\underset{(\mu,y,z)\in\P([-K,K])\times[-K,K]^2}{\sup}\left|\partial_y \delta H_z(\mu,y)\right|\right)W_1(m,\delta_0),
\end{multline*}
we can apply Fubini-Lebesgue's theorem to write
\begin{align*}
\int_0^1\left[H_x(t\,m + (1-t)\delta_0) - H_x(m)\right]dt =& \int_0^1\int_0^1\uno{s\geq t}\int_\r \delta H_x(s\, m + (1-s)\, \delta_0,y)d(\delta_0-m)(y)\, ds\,dt\\
=& \int_0^1 \int_0^s \int_\r \delta H_x(s\, m + (1-s)\, \delta_0,y)d(\delta_0-m)(y)\,dt\,ds\\
=&\int_0^1 s\int_\r \delta H_x(s\, m + (1-s)\, \delta_0,y)d(\delta_0-m)(y)\, ds\\
=&\int_0^1 \int_\r \delta H_x(s\, m + (1-s)\, \delta_0,y)d\left[s(\delta_0-m)\right](y)\, ds\\
=&\int_0^1 \int_\r \delta H_x(s\, m + (1-s)\, \delta_0,y)d\left[\delta_0 - (s\,m+(1-s)\delta_0)\right](y)\, ds\\
=& \int_0^1 \delta H_x(s\,m + (1-s)\delta_0,0)ds.
\end{align*}

As a consequence
\begin{equation}\label{eq:finfin1}
\int_0^1 H_x(t\,m + (1-t)\delta_0)\,dt - \int_0^1 \delta H_x(t\,m + (1-t)\delta_0,0)\,dt = H_x(m).
\end{equation}
In particular, for $x=0$ we get
$$\int_0^1 H_0(t\,m + (1-t)\delta_0)\,dt - \int_0^1 \delta H_0(t\,m + (1-t)\delta_0,0)\,dt = H_0(m).$$

Using this last equation, we can write~\eqref{eq:fin2} as
\begin{align}
&\int_0^1\int_\r \delta H_0(t\,m+(1-t)\delta_0,y)dm(y)dt-\int_0^1 H_0(tm+(1-t)\delta_0)dt\nonumber\\
&= \int_0^1\int_\r \delta H_0(t\,m+(1-t)\delta_0,y)d(m-\delta_0)(y)dt + H_0(m)\nonumber\\
&= H_0(m) - H_0(\delta_0) - H_0(m) = 0.\label{eq:finfin2}
\end{align}

Then, using respectively~\eqref{eq:finfin1} and~\eqref{eq:finfin2} in~\eqref{eq:fin1} and~\eqref{eq:fin2}, we obtain that: for all~$m\in\P([-K,K])$ and~$x\in[-K,K]$,
$$\Delta F(m,x) = H(m,x).$$

Finally, Theorem~\ref{prop:criteria} allows to conclude the proof of Theorem~\ref{thm:TFA}.

\begin{appendix}

\section{Counter-example for Theorem~\ref{thm:TFA}}\label{ref:counter}

In this section, we provide a counter-example for Theorem~\ref{thm:TFA} when condition~$(iii)$ is not satisfied. For the sake of simplicity, we work under the following definition: a function~$F$ is differentiable at~$m_0$ if for all~$m$ in a neighborhood of $m_0$,
\begin{equation}\label{eq:vraidef}
F(m) = F(m_0) + \int_\r \delta F(m_0,x)\,d(m-m_0)(x) + o\left(W_1(m,m_0)\right).
\end{equation}

Notice that, if a function~$F$ is differentiable on some convex set~$D$ in the sense~\eqref{eq:vraidef}, and if $F$ belongs to $C^1$ on~$D$, then $F$ is differentiable in the sense of Definition~\ref{def:deriv2} (i.e. the notation of differentiability that we use in all the paper). This is a straightforward consequence of the fact that, under these assumptions, for any~$m,m_0\in D$, the function
$$f : t\in[0,1]\longmapsto F\left((1-t)m_0 + t\,m\right)$$
is $C^1$ with
$$f':t\in[0,1]\longmapsto \int_\r \delta F\left((1-t)m_0 + t\,m,x\right)\,d(m-m_0)(x),$$
hence
$$F(m) - F(m_0) = f(1) - f(0) = \int_0^1 f'(t)dt = \int_0^1 \int_\r \delta F\left((1-t)m_0 + t\,m,x\right)\,d(m-m_0)(x).$$

Now let us give an elementary example of a differentiable function.
\begin{ex}\label{ex:monome}
For any $\phi:C^1_b(\r)$, the function
$$F_\phi:m\in\P_1(\r)\longmapsto \int_\r \phi(x)\,dm(x)$$
is differentiable (in the sense~\eqref{eq:vraidef}) and is $C^1$ with: for all~$m\in\P_1(\r),x\in\r,$
$$\delta F_\phi(m,x) = \phi(x) - \int_\r \phi(y)\,dm(y),$$
since, for any~$m,\mu\in\P_1(\r),$
$$F_\phi(m) - F_\phi(\mu) - \int_\r \phi(x)\,d(m-\mu)(x)dt = 0 .$$
\end{ex}

In order to obtain the counter-example, the following lemma is also required. It states that the differential operator~$\delta$ satisfies usual conditions. Note that this lemma is only true under the ``canonical condition''~\eqref{eq:cano}. We refer to Lemma~2.4 of \cite{erny_generators_2025} for the proof.
\begin{lem}[Lemma~2.4 of \cite{erny_generators_2025}]\label{lemelem} Let $D\subseteq\mathcal{P}_1(\r)$ be an open convex set.
\begin{itemize}
\item[$(a)$] Let $F: D\rightarrow\r$ be a constant function. Then $F$ is differentiable (in the sense~\eqref{eq:vraidef}) at any~$m\in D$, and its derivative is the zero function.
\item[$(b)$] Let $F,G:D\rightarrow\r$ be differentiable (in the sense~\eqref{eq:vraidef}) at~$m_0\in D,$ and~$\alpha\in\r$. Then, the function~$\alpha F+G$ is differentiable (in the sense~\eqref{eq:vraidef}) at~$m_0$, and, for all~$x\in\r$,
$$\delta (\alpha F + G) (m_0,x) = \alpha\delta F(m_0,x) + \delta G(m_0,x).$$
\item[$(c)$] Let $F,G:D\rightarrow\r$ be Lipschitz continuous and differentiable on~$D$ (in the sense~\eqref{eq:vraidef}). Then, the product function $FG$ is differentiable (in the sense~\eqref{eq:vraidef}) at any~$m_0\in D$, with, for all~$x\in\r$,
$$\delta (FG)(m_0,x) = F(m_0)\delta G(m_0,x) + G(m_0)\delta F(m_0,x).$$
\end{itemize}
\end{lem}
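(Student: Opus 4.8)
The plan is to treat the three items in order, preceded by a pointwise uniqueness observation that lets me identify any candidate satisfying both the expansion~\eqref{eq:vraidef} and the canonical normalisation~\eqref{eq:cano} with the canonical derivative $\delta$. The whole argument is purely formal manipulation of the defining identity; the only genuine analytic input is the Lipschitz hypothesis, which is confined to item $(c)$.

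First I would record the uniqueness of the canonical derivative at a fixed point. Suppose two functions $K_1,K_2:\r\to\r$ both realise~\eqref{eq:vraidef} for a given $F$ at $m_0$ and both satisfy $\int_\r K_i\,dm_0=0$. Writing $K:=K_1-K_2$ and subtracting the two expansions gives $\int_\r K\,d(m-m_0)=o(W_1(m,m_0))$ for all $m$ near $m_0$. Testing against $m=(1-\eps)m_0+\eps\delta_y$, for which $m-m_0=\eps(\delta_y-m_0)$ and $W_1(m,m_0)\leq \eps\int_\r|y-z|\,dm_0(z)$, and dividing by $\eps$, I obtain $K(y)-\int_\r K\,dm_0=o(1)$ as $\eps\to 0$; since the left-hand side does not depend on $\eps$, it vanishes, so $K$ is constant, and the normalisation forces $K\equiv 0$. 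Hence the canonical derivative at $m_0$ is unique, and in each item it suffices to exhibit one admissible expansion whose integrand integrates to zero against $m_0$.

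Items $(a)$ and $(b)$ are then immediate. For $(a)$, a constant $F$ gives $F(m)-F(m_0)=0=\int_\r 0\,d(m-m_0)+o(W_1(m,m_0))$, and the zero function trivially satisfies~\eqref{eq:cano}. For $(b)$, multiplying the expansion of $F$ by $\alpha$ and adding that of $G$ yields
$$(\alpha F+G)(m)-(\alpha F+G)(m_0)=\int_\r\big(\alpha\,\delta F(m_0,x)+\delta G(m_0,x)\big)\,d(m-m_0)(x)+o(W_1(m,m_0)),$$
while $\int_\r\big(\alpha\,\delta F(m_0,x)+\delta G(m_0,x)\big)\,dm_0(x)=\alpha\cdot 0+0=0$ by~\eqref{eq:cano} applied to $F$ and $G$; uniqueness identifies this integrand with $\delta(\alpha F+G)(m_0,\bullet)$.

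The substantive item is $(c)$, and it is where I expect the main obstacle — the bookkeeping of error terms. I would start from the telescoping identity
$$F(m)G(m)-F(m_0)G(m_0)=F(m)\big(G(m)-G(m_0)\big)+G(m_0)\big(F(m)-F(m_0)\big).$$
The role of the Lipschitz hypothesis is to ensure the first-order increments are genuinely first order: from~\eqref{eq:vraidef} together with $|F(m)-F(m_0)|\leq L_F\,W_1(m,m_0)$ one gets $\int_\r\delta F(m_0,x)\,d(m-m_0)(x)=O(W_1(m,m_0))$, and likewise for $G$. Substituting $F(m)=F(m_0)+O(W_1(m,m_0))$ into the first summand, the cross term is a product of two $O(W_1)$ factors, hence $o(W_1(m,m_0))$, leaving $F(m_0)\int_\r\delta G(m_0,x)\,d(m-m_0)(x)+o(W_1(m,m_0))$; the second summand is handled directly. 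Adding the two produces the expansion with integrand $F(m_0)\delta G(m_0,x)+G(m_0)\delta F(m_0,x)$, which integrates to zero against $m_0$ by~\eqref{eq:cano}, so uniqueness again identifies it with $\delta(FG)(m_0,\bullet)$. The delicate point is precisely to verify that every term not of the stated form is $o(W_1(m,m_0))$, and it is here — not in $(a)$ or $(b)$ — that Lipschitz continuity is indispensable, since without it the increment $F(m)-F(m_0)$ need not be $O(W_1(m,m_0))$ and the cross term could fail to be negligible.
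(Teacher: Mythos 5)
Your proof is correct. Note that the paper does not actually contain a proof of this lemma---it defers entirely to Lemma~2.4 of \cite{erny_generators_2025}---but your argument is the standard one for this statement: the telescoping decomposition $F(m)G(m)-F(m_0)G(m_0)=F(m)(G(m)-G(m_0))+G(m_0)(F(m)-F(m_0))$ with the Lipschitz hypothesis used precisely to make the cross term $O(W_1(m,m_0))\cdot O(W_1(m,m_0))=o(W_1(m,m_0))$, preceded by the uniqueness argument via the Dirac perturbations $m=(1-\eps)m_0+\eps\delta_y$ (which lie in $D$ for small $\eps$ since $D$ is open) to identify the exhibited integrands with the canonical derivative. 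Your diagnosis of where the Lipschitz assumption is indispensable, and where it is not, is exactly right.
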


Let us end this section with the following counter-example of Theorem~\ref{thm:TFA} when condition~$(iii)$ is not verified.
\begin{ex}\label{ex:counter}
Let $\phi,\psi$ belong to $C^1_b(\r)$ %positive functions vanishing at infinity with different rates, 
and define
\begin{align*}
H:&(m,x)\in\P_1(\r)\times\r\longmapsto \left[\phi(x) - \int_\r \phi(y)\,dm(y)\right]\left(\int_\r \psi(y)\,dm(y)\right),\\
F:&m\in\P_1(\r)\longmapsto \int_0^1 \int_\r H((1-t)\delta_0 + t\,m,x)\,d(m-\delta_0)(x)\,dt.
\end{align*}

By the results proved in this section, $H$ is differentiable with: for any~$m\in\P_1(\r),x,y\in\r,$
$$\delta H_x(m,y) = \left[\phi(x) - \int_\r \phi\, dm\right] \left[\psi(y) - \int_\r \psi\,dm\right] - \left[\phi(y) - \int_\r \phi\,dm\right] \int_\r \psi\,dm,$$
hence all the conditions of Theorem~\ref{thm:TFA} (except $(iii)$) are trivially satisfied.

For all~$m\in\P_1(\r)$,
\begin{align*}
F(m)=& \int_0^1 \left(\int_\r \psi(y)~d((1-t)\delta_0+tm)(y)\right)\int_\r\phi(x)\, d(m-\delta_0)(x)\,dt\\
=&\int_0^1\left((1-t)\psi(0) + t\int_\r \psi(y)\,dm(y)\right)dt\left(\int_\r\phi(x)\,dm(x) - \phi(0)\right)\\
=& \frac12\left(\psi(0) + \int_\r \psi(y)\,dm(y)\right)\left(\int_\r\phi(x)\,dm(x) - \phi(0)\right),
\end{align*}
hence, for all~$m\in\P_1(\r),x\in\r$,
\begin{align*}
\delta F(m,x) =& \frac12\left(\psi(x) - \int_\r \psi(y)\,dm(y)\right)\left(\int_\r\phi(y)\,dm(y) - \phi(0)\right)\\
&+\frac12\left(\psi(0) + \int_\r \psi(y)\,dm(y)\right)\left(\phi(x) - \int_\r \phi(y)\,dm(y)\right).
\end{align*}

For simplicity, let us write
\begin{align*}
H(m,x) =& \phi(x)\int_\r \psi\,dm + C_1(m)\\
\delta F(m,x) =& \frac12 \phi(x)\left[\int_\r \psi\,dm + \psi(0)\right] + \frac12\psi(x)\left[\int_\r\phi\,dm - \phi(0)\right] +C_2(m),
\end{align*}

It is clear that both functions~$H$ and~$\delta F$ are different in general, and the same statement holds for $\partial_x H$ and~$\partial_x \delta F$. % Indeed, let us fix some~$m\in\P_1(\r)$ such that~$\int \phi\,dm$ and~$\int\psi\,dm$ are not zero, then $H(m,x)$ vanishes at rate~$\phi(x)$ (up to some multiplicative constant) as $x$ goes to infinity, and $\delta F(m,x)$ at rate $\max(\phi(x),\psi(x))$ (up to some multiplicative constant). 
%For example, choosing $m=\delta_1$, $\phi(x) = (1+x^4)^{-1}$ and $\psi(x) = (1+x^2)^{-1}$ leads to
%$$H(\delta_1,x)= \frac12\phi(x) + C_1(\delta_1)\textrm{ and }\delta F(\delta_1,x) = \frac34\phi(x) - \frac18\psi(x) + C_2(\delta_1),$$
%which are not equivalent as $x$ goes to infinity.

Roughly speaking, %one can note that, for all~$m\in\P_1(\r)$ and~$x\in\r$,
%$$H(m,x) = \phi(x)\int_\r \psi\,dm + C_1(m)\textrm{ and }\delta F(m,x) = \frac12 \phi(x)\int_\r \psi\,dm + \frac12\psi(x)\int_\r\phi\,dm +C_2(m),$$
%hence 
$\delta F$ seems to be a ``symmetrized version'' of $H$ w.r.t.~$(\phi,\psi)$. This symmetrical property corresponds to condition~$(iii)$ of Theorem~\ref{thm:TFA}, which comes from Lemma~\ref{lem:intervert} below. As explained in Remark~\ref{rem:iiidecept}, this implies that there exists no function~$\tilde F$ such that $H=\tilde F$, otherwise, $H$ would necessarily be a version of the derivative of the function~$F$ defined above.
\end{ex}

\section{Symmetrical property of the second-order derivative}\label{append:syme}

The aim of this section is to justify assumption~$(iii)$ of Theorem~\ref{thm:TFA}. Formally, let us recall that the aim of Theorem~\ref{thm:TFA} is to prove that, under some hypotheses, a function $H:\P_1(\r)\times\r\rightarrow\r$ is the derivative of some function~$F:\P_1(\r)\times\r$ (i.e. for all $m\in\P_1(\r),x\in\r,$ $H(m,x) = \delta F(m,x)$). Applying Lemma~\ref{lem:intervert} below to the function~$F$ proves that condition~$(iii)$ of Theorem~\ref{thm:TFA} is necessary, provided that $H$ is $C^{1,1}$.

To state and prove this result, a suitable definition of ``twice differentiability'' and some regularity for measure-variable functions are required.

\begin{defi}
A function $F:\P_1(\r)\rightarrow\r$ is said to be twice differentiable if it is differentiable, and if, for any~$x\in\r,$ $m\in \P_1(\r)\mapsto \delta F(m,x)$ is also differentiable. In this case, let us denote, for $m\in \P_1(\r),x,y\in\r,$
$$\delta^2 F(m,x,y) = \delta (\delta F)_x(m,y).$$

A function~$F:\P_1(\r)\rightarrow\r$ is $C^2$ if it is twice differentiable such that, the functions
$$m\in \P_1(\r)\mapsto F(m);~(m,x)\in \P_1(\r)\times\r\mapsto \delta F(m,x);~(m,x,y)\in \P_1(\r)\times\r^2\mapsto \delta^2 F(m,x,y)$$
are continuous w.r.t.~$m$ and $C^1$ w.r.t. the real variables, and if the functions
\begin{align*}
(m,x)\in\P_1(\r)\times\r&\longmapsto\partial_x\delta F(m,x),\\
(m,x,y)\in\P_1(\r)\times\r^2&\longmapsto\partial_x \delta^2F(m,x,y),\\
(m,x,y)\in\P_1(\r)\times\r^2&\longmapsto\partial_y \delta^2F(m,x,y)
\end{align*}
are jointly continuous.
\end{defi}

\begin{lem}\label{lem:intervert}
Let $F:\P_1(\r)\to\r$ be $C^2$. Then, for all~$x,y\in\r$ and $m\in \P_1(\r)$,
$$\delta^2 F(m,x,y) - \delta F(m,x) = \delta^2 F(m,y,x) - \delta F(m,y).$$
\end{lem}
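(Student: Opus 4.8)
The plan is to realise the identity as an equality of mixed second derivatives of a scalar function of two real parameters and then invoke Schwarz's theorem. Fix $m\in\P_1(\r)$ and $x,y\in\r$, and for $(\eps,\eta)$ in the open triangle $T=\{\eps>0,\ \eta>0,\ \eps+\eta<1\}$ set
$$m_{\eps,\eta}:=(1-\eps-\eta)\,m+\eps\,\delta_x+\eta\,\delta_y\in\P_1(\r),\qquad g(\eps,\eta):=F(m_{\eps,\eta}).$$
Because $\partial_\eps m_{\eps,\eta}=\delta_x-m$ and $\partial_\eta m_{\eps,\eta}=\delta_y-m$ are \emph{fixed} signed measures of total mass zero, the differentiability of $F$ gives, via the line-segment representation recalled in Appendix~\ref{ref:counter}, $\partial_\eps g=\int_\r\delta F(m_{\eps,\eta},z)\,d(\delta_x-m)(z)$ on $T$. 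Differentiating this once more, now along $\delta_y-m$ using the differentiability of the map $m'\mapsto\delta F(m',z)$ (available since $F$ is $C^2$), yields on $T$
$$\partial_\eta\partial_\eps g=\delta^2F(m_{\eps,\eta},x,y)-\int_\r\delta^2F(m_{\eps,\eta},x,w)\,dm(w)-\int_\r\delta^2F(m_{\eps,\eta},z,y)\,dm(z)+\iint\delta^2F(m_{\eps,\eta},z,w)\,dm(w)\,dm(z),$$
and the analogous formula for $\partial_\eps\partial_\eta g$ is obtained by exchanging the roles of $x$ and $y$.

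Next I would show that $g$ is genuinely $C^2$ on $T$, so that Schwarz's theorem forces $\partial_\eta\partial_\eps g=\partial_\eps\partial_\eta g$ there. This is the main analytic obstacle: it amounts to justifying the differentiation under the two nested integrals and the continuity of the resulting integrals in $(\eps,\eta)$, which I would handle exactly as the joint-continuity arguments of Lemmas~\ref{lem:UC} and~\ref{lem:UC2} and the uniform-integrability Lemma~\ref{lem:UI} are used elsewhere (the joint continuity of $\partial_x\delta^2F$ and $\partial_y\delta^2F$ bounds the integrands locally, and Vitali's theorem controls the integrals against $m$). Equating the two second-derivative formulas, the double integrals cancel, leaving for every $(\eps,\eta)\in T$
$$\delta^2F(m_{\eps,\eta},x,y)-\int_\r\delta^2F(m_{\eps,\eta},x,w)\,dm(w)-\int_\r\delta^2F(m_{\eps,\eta},z,y)\,dm(z)=\delta^2F(m_{\eps,\eta},y,x)-\int_\r\delta^2F(m_{\eps,\eta},y,w)\,dm(w)-\int_\r\delta^2F(m_{\eps,\eta},z,x)\,dm(z).$$
Letting $(\eps,\eta)\to(0,0)$ gives $m_{\eps,\eta}\to m$ in $W_1$, and the continuity of $\delta^2F$ in the measure variable (together with the uniform integrability above) transfers the identity to $m$ itself.

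It then remains to simplify the three terms on each side at the measure $m$. The canonical normalisation~\eqref{eq:cano}, applied to the function $m'\mapsto\delta F(m',x)$, gives $\int_\r\delta^2F(m,x,w)\,dm(w)=0$, so the first integral on each side vanishes. For the remaining integral I would establish the auxiliary identity
$$\int_\r\delta^2F(m,z,x)\,dm(z)=-\,\delta F(m,x)\qquad(m\in\P_1(\r),\ x\in\r),$$
by differentiating the identically-zero map $\mu\mapsto\int_\r\delta F(\mu,z)\,d\mu(z)$ along the segment $\mu_s=(1-s)m+s\,\delta_x$: a Leibniz expansion of this pairing (in the spirit of Lemma~\ref{lemelem}(c)) produces the sum of $\int_\r\delta^2F(m,z,x)\,dm(z)$ and $\delta F(m,x)$, once~\eqref{eq:cano} is used again to discard the complementary integrals, and this sum must equal zero.

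Substituting the vanishing of the first integrals and this auxiliary identity into the limiting relation turns it into $\delta^2F(m,x,y)+\delta F(m,y)=\delta^2F(m,y,x)+\delta F(m,x)$, which rearranges to the claimed symmetry $\delta^2F(m,x,y)-\delta F(m,x)=\delta^2F(m,y,x)-\delta F(m,y)$. I expect the genuine difficulty to reside entirely in the $C^2$-differentiation under the integral signs and the limit passage as $(\eps,\eta)\to(0,0)$ from inside $T$; the algebraic manipulations involving the canonical condition are routine once those soft-analysis points are secured.
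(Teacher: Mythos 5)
Your proposal is correct, and its core mechanism is the same as the paper's: realize $\delta^2 F$ as a mixed second partial derivative of a scalar function of two parameters obtained by perturbing $m$ with $\delta_x$ and $\delta_y$, invoke Schwarz's theorem, and then simplify using the canonical condition~\eqref{eq:cano}. Two differences are worth noting. First, you use the symmetric parametrization $m_{\eps,\eta}=(1-\eps-\eta)m+\eps\delta_x+\eta\delta_y$ on the open triangle and then pass to the limit $(\eps,\eta)\to(0,0)$, whereas the paper uses the nested parametrization $(1-\eta_2)\left[(1-\eta_1)m+\eta_1\delta_y\right]+\eta_2\delta_x$ and computes the mixed partials at the corner directly via Lemma~\ref{lem:lineardawson} and Lemma~\ref{lem:215}; your choice makes the $x\leftrightarrow y$ exchange and the cancellation of the double integral $\iint\delta^2F(\cdot,z,w)\,dm(w)\,dm(z)$ transparent, at the price of one extra limit passage that the paper does not need. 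Second, and this is the genuine divergence, the paper never establishes the auxiliary identity $\int_\r\delta^2F(m,z,x)\,dm(z)=-\delta F(m,x)$ as a separate fact: it derives the asymmetric relation~\eqref{delta2Gschwarz}, substitutes that relation into itself with $x$ and $y$ exchanged, and concludes that the quantity $-\int_\r\delta^2F(m,z,x)\,dm(z)-\delta F(m,x)$ is independent of $x$ and equal to its own negative, hence zero. You instead prove the identity directly by differentiating the identically vanishing functional $\mu\mapsto\int_\r\delta F(\mu,z)\,d\mu(z)$ along $\mu_s=(1-s)m+s\,\delta_x$; this computation is correct (the Leibniz expansion gives $\delta F(m,x)+\int_\r\delta^2F(m,z,x)\,dm(z)=0$ once~\eqref{eq:cano} eliminates $\int_\r\delta F(m,z)\,dm(z)$ and the double integral), and it is more modular and reusable, while the paper's bootstrap keeps everything inside a single argument. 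On the soft-analysis points (justifying differentiation under $\int\cdot\,dm(z)$, which implicitly requires growth control of $\delta^2F$ in its middle argument, and the $C^2$-regularity of the two-parameter map needed for Schwarz), your sketch is at essentially the same level of rigor as the paper's own proof, which also treats these steps briefly, so they do not constitute a gap relative to the paper's standard.
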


Let us begin by stating the following lemma which is almost the same as Lemma~2.15 of \cite{erny_generators_2025}. The only difference is the definition of the linear differentiability (Definition~\ref{def:deriv2} here rather than~\eqref{eq:deriv1}). The proof being almost the same, it is omitted

\begin{lem}\label{lem:215}
Let $F$ be $C^1$, and $m_0,m_1\in \P_1(\r)$. Then, the function
$$f : t\in[0,1]\longmapsto F\left((1-t)m_0 + t m_1\right)$$
is differentiable on~$[0,1]$ with, for all~$t\in[0,1],$
$$f'(t)=\int_\r \delta F\left((1-t)m_0 + t m_1,x\right)\,d(m_1 - m_0)(x).$$
\end{lem}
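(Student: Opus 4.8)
The plan is to exploit the affine structure of the interpolation $m_t := (1-t)m_0 + t\,m_1$ directly inside Definition~\ref{def:deriv2}. Fix $t_0\in[0,1]$ and an increment $h$ small enough that $t_0+h\in[0,1]$ (a one-sided increment at the endpoints $0$ and $1$). Since $t\mapsto m_t$ is affine, we have $m_{t_0+h}-m_{t_0}=h(m_1-m_0)$ and, for every $s\in[0,1]$, $(1-s)m_{t_0}+s\,m_{t_0+h}=m_{t_0+sh}$. Because $F$ is $C^1$ it is differentiable in the sense of Definition~\ref{def:deriv2}, so applying that formula with base point $m_{t_0}$ and argument $m_{t_0+h}$ gives
$$f(t_0+h)-f(t_0)=\int_0^1\int_\r \delta F\big((1-s)m_{t_0}+s\,m_{t_0+h},x\big)\,d(m_{t_0+h}-m_{t_0})(x)\,ds.$$
Substituting the two identities above turns this into $f(t_0+h)-f(t_0)=h\int_0^1\Psi(m_{t_0+sh})\,ds$, where I write $\Psi(\nu):=\int_\r\delta F(\nu,x)\,d(m_1-m_0)(x)$. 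Thus the difference quotient equals $\int_0^1\Psi(m_{t_0+sh})\,ds$ exactly, and it only remains to pass to the limit $h\to0$.

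The key step is the continuity of $\Psi$ along the curve $\K:=\{m_t:t\in[0,1]\}$. First, $\K$ is compact: it is the image of $[0,1]$ under the continuous map $t\mapsto m_t$, continuity following from $W_1(m_t,m_{t'})\le|t-t'|\,W_1(m_0,m_1)$. To see that $\Psi$ is continuous on $\K$, decompose $\Psi(\nu)=\int_\r\delta F(\nu,x)\,dm_1(x)-\int_\r\delta F(\nu,x)\,dm_0(x)$. Since $F$ is $C^1$, its canonical derivative $\delta F$ is $C^{0,1}$, so $\nu\mapsto\delta F(\nu,x)$ is continuous for each fixed $x$, and by Lemma~\ref{lem:UI} the family $\{\delta F(\nu,\cdot):\nu\in\K\}$ is uniformly integrable with respect to both $m_0$ and $m_1$. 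Vitali's convergence theorem then yields continuity of each of the two integrals, hence of $\Psi$, on $\K$. Being continuous on a compact set, $\Psi$ is uniformly continuous there; denote by $\omega_\Psi$ its modulus of continuity.

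To conclude, I would estimate
$$\left|\frac{f(t_0+h)-f(t_0)}{h}-\Psi(m_{t_0})\right|=\left|\int_0^1\big[\Psi(m_{t_0+sh})-\Psi(m_{t_0})\big]\,ds\right|\le\omega_\Psi\big(|h|\,W_1(m_0,m_1)\big),$$
using $W_1(m_{t_0+sh},m_{t_0})=|sh|\,W_1(m_0,m_1)\le|h|\,W_1(m_0,m_1)$; the right-hand side vanishes as $h\to0$, giving $f'(t_0)=\Psi(m_{t_0})=\int_\r\delta F(m_{t_0},x)\,d(m_1-m_0)(x)$, as claimed. I expect the main obstacle to be precisely the continuity of $\Psi$: because $\delta F$ is only sublinear (not bounded) in the real variable and $m_0,m_1$ need not be compactly supported, one cannot simply dominate the integrand by a constant, so the uniform-integrability input of Lemma~\ref{lem:UI} together with Vitali's theorem is what makes the passage to the limit legitimate. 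The affine-structure manipulations and the one-sided endpoint bookkeeping are routine by comparison.
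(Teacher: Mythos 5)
Your proof is correct, and it is worth noting that the paper itself does not spell out a proof of Lemma~\ref{lem:215}: it defers to Lemma~2.15 of \cite{erny_generators_2025}, whose argument is carried out under the Fr\'echet-type definition~\eqref{eq:deriv1}, where differentiability of $f$ at $t_0$ is essentially immediate from the first-order expansion together with $W_1(m_{t_0+h},m_{t_0})=|h|\,W_1(m_0,m_1)$. What you do instead is give the adaptation the paper only alludes to: you work directly from the integral-form Definition~\ref{def:deriv2}, which is the hypothesis actually in force here, using the affine identities $m_{t_0+h}-m_{t_0}=h(m_1-m_0)$ and $(1-s)m_{t_0}+s\,m_{t_0+h}=m_{t_0+sh}$ to rewrite the difference quotient exactly as $\int_0^1\Psi(m_{t_0+sh})\,ds$, and then passing to the limit via continuity of $\Psi$ on the compact segment $\{m_t:t\in[0,1]\}$. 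Your continuity step is sound and uses precisely the paper's own toolbox: $C^1$ implies $\delta F$ is $C^{0,1}$, so $\nu\mapsto\delta F(\nu,x)$ is continuous for fixed $x$, and Lemma~\ref{lem:UI} plus Vitali's theorem handle the lack of a uniform bound in $x$ (this is the same combination used in the proofs of Lemmas~\ref{lem:lineardawson} and~\ref{lem:PhiCont}); alternatively, the uniform sublinearity of $\delta F$ on the compact $\K$ would allow dominated convergence with dominating function $C(1+|x|)\in L^1(m_0+m_1)$. The trade-off between the two routes: the cited one-line expansion requires the stronger pointwise expansion~\eqref{eq:deriv1} as input, whereas your argument is self-contained under the weaker integral-form differentiability assumed in this paper, at the modest cost of the Vitali continuity lemma. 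Your endpoint bookkeeping (one-sided increments at $t_0\in\{0,1\}$) and the sign handling for $h<0$ are also in order.
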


Now, we turn to the
\begin{proof}[Proof of Lemma~\ref{lem:intervert}]
Let us fix some~$m\in\P_1(\r)$,~$x,y\in\r$. Using twice Lemma~\ref{lem:lineardawson},
\begin{align*}
\delta^2 F(m,x,y)=&\delta ((\delta F)_x)(m,y)\\
 =&\ll(\partial_{\eta_1} (\delta F)_x ((1-\eta_1) m + \eta_1 \delta_y)\rr)_{|\eta_1=0} \\
=& \partial^2_{\eta_1\eta_2} F((1-\eta_2)[(1-\eta_1)m + \eta_1\delta_y] + \eta_2\delta_x)_{|\eta_1=\eta_2=0}\\
=& \partial^2_{\eta_1\eta_2} F((1-\eta_2)(1-\eta_1)m + \eta_1\delta_y + \eta_2\delta_x -\eta_1\eta_2\delta_y)_{|\eta_1=\eta_2=0}.
\end{align*}

Using Lemma~\ref{lem:215} multiple times, it can be shown, thanks to the assumption that $F$ is $C^2$, that
$$(\eta_1,\eta_2)\longmapsto F((1-\eta_2)(1-\eta_1)m + \eta_1\delta_y + \eta_2\delta_x -\eta_1\eta_2\delta_y)$$
belongs to~$C^2(\r^2)$. Hence, by Schwarz' theorem, it is possible to exchange the role of~$\eta_1$ and~$\eta_2$:
$$\delta^2 F(m,x,y) = \partial^2_{\eta_1\eta_2} G((1-\eta_2)(1-\eta_1)m + \eta_1\delta_x + \eta_2\delta_y -\eta_1\eta_2\delta_y)_{|\eta_1=\eta_2=0}.$$

Let us denote
$$\mu(\eta_1,\eta_2) = (1-\eta_2)(1-\eta_1)m + \eta_1\delta_x + \eta_2\delta_y-\eta_1\eta_2\delta_y\textrm{ and }\nu(\eta_1) = \mu(\eta_1,0) = (1-\eta_1)m + \eta_1\delta_x.$$

By noticing that
$$\mu(\eta_1,\eta_2) = (1-\eta_2)\nu(\eta_1) + \eta_2\ll(\eta_1 \delta_x + (1-\eta_1)\delta_y\rr),$$
we have, by Lemma~\ref{lem:215} (with $m_0 = \nu(\eta_1)$ and~$m_1 = \eta_1 \delta_x + (1-\eta_1)\delta_y$),
\begin{align*}
\partial_{\eta_2} F(\mu(\eta_1,\eta_2))_{|\eta_2 = 0} =& \int_\r \delta F(\nu(\eta_1),z)d((1-\eta_1)(\delta_y - m))(z) \\
=& (1-\eta_1)\delta F(\nu(\eta_1),y) - (1-\eta_1)\int_\r \delta F(\nu(\eta_1),z)dm(z).
\end{align*}

So,
\begin{align*}
\partial_{\eta_1}\ll(\partial_{\eta_2} F(\mu(\eta_1,\eta_2))_{|\eta_2 = 0}\rr) =& -\delta F(\nu(\eta_1),y) + (1-\eta_1)\partial_{\eta_1}\delta F(\nu(\eta_1),y)\\
&+\int_\r \delta F(\nu(\eta_1),z)dm(z) -(1-\eta_1)\int_\r  \partial_{\eta_1} \delta F(\nu(\eta_1),z)dm(z),
\end{align*}
where we have differentiated under the integral sign for the last term above. This is legit since, using once again Lemma~\ref{lem:215} (with $m_0=m$ and $m_1=\delta_x$) for all~$z\in\r$,
$$\partial_{\eta_1} \delta F(\nu(\eta_1,z) = \partial_{\eta_1} \delta F(\nu((1-\eta_1)m + \eta_1\delta_x,z) = \int_\r \delta ((\delta F)_z)(\nu(\eta_1),z_0) d(\delta_x - m)(z_0).$$
Consequently, recalling that $\delta F$ is the canonical derivative of~$F$, for all~$z\in\r$,
$$\partial_{\eta_1} \delta F(\nu(\eta_1,z))_{|\eta_1=0} = \delta ((\delta F)_z)(m,x) - \int_\r \delta((\delta F)_z)(m,z_0)dm(z_0) = \delta ((\delta F)_z)(m,x).$$

Whence
\begin{align*}
\partial^2_{\eta_1\eta_2} F(\mu(\eta_1,\eta_2))_{|\eta_1=\eta_2 = 0} =& -\delta F(m,y) + \delta ((\delta F)_y)(m,x)\\
&+\int_\r \delta F(m,z)dm(z) -\int_\r  \delta ((\delta F)_z)(m,x)dm(z)\\
=& -\delta F(m,y) + \delta^2 F(m,y,x)-\int_\r  \delta ((\delta F)_z)(m,x)dm(z)\\
=&-\delta F(m,y) + \delta^2 F(m,y,x)-\int_\r  \delta^2 F(m,z,x)dm(z).
\end{align*}

Recalling that
$$\delta^2 F(m,x,y) =\partial^2_{\eta_1\eta_2} F(\mu(\eta_1,\eta_2))_{|\eta_1=\eta_2 = 0},$$
we have shown that
\begin{equation}\label{delta2Gschwarz}
\delta^2 F(m,x,y) = -\delta F(m,y) + \delta^2 F(m,y,x)-\int_\r  \delta^2 F(m,z,x)dm(z).
\end{equation}

%hence
%\begin{equation}\label{delta2Gschwarz}
%\delta^2 F(m,x,y) - \delta F(m,x) = -\delta F(m,y) + \delta^2 F(m,y,x)-\int_\r  \delta^2 F(m,z,x)dm(z) - \delta F(m,x).
%\end{equation}

Then, replacing in~\eqref{delta2Gschwarz}, the term~$\delta^2 F(m,y,x)$ of the RHS, by the whole RHS of~\eqref{delta2Gschwarz} where the variables~$x$ and~$y$ are exchanged, yields
\begin{equation}\label{zero}
-\int_\r  \delta^2 F(m,z,x)dm(z) - \delta F(m,x)-\int_\r  \delta^2 F(m,z,y)dm(z) - \delta F(m,y)=0.
\end{equation}

In particular, the quantity
$$-\int_\r  \delta^2 F(m,z,x)dm(z) - \delta F(m,x)$$
does not depend on~$x$, and can only be zero thanks to~\eqref{zero}. So~\eqref{delta2Gschwarz} becomes
$$\delta^2 F(m,x,y) - \delta F(m,x) = \delta^2 F(m,y,x) - \delta F(m,y).$$
which proves the statement of the proposition.
\end{proof}

\section{Proof of Proposition~\ref{prop:mncv}}\label{append:proofstech}

Let $m\in\P_1(\r)$ whose support is included in $[-K,K]$, and $\phi:\r\rightarrow\r$ be Lipschitz continuous with Lipschitz constant non-greater than one, and let us note $N=n K$. Using the fact that $(\psi_{n,k})_{-N\leq k\leq N}$ is a partition of unity, we have
\begin{align*}
\ll|\int_\r \phi(x) d(m-m^{[n]})(x)\rr|=& \ll| \int_{\r}\phi(x)dm(x) - \sum_{k=-N}^N \left(\int_\r \psi_{n,k}(x)~dm(x)\right)\phi(k/n)\rr|\\
=&\ll|\sum_{k=-N}^N\int_{\r} \psi_{n,k}(x)\phi(x)dm(x) - \sum_{k=-N}^N \int_{\r}\phi(k/n)\psi_{n,k}(x)dm(x)\rr|.
\end{align*}

Since the support of each $\psi_{n,k}$ is included in~$I_{n,k}$ ($-N\leq k\leq N$), we obtain
\begin{align}
\ll|\int_\r \phi(x) d(m-m^{[n]})(x)\rr|\leq& \sum_{k=-N}^N \int_{I_{n,k}}\psi_{n,k}(x)\cdot\ll|\phi(x) - \phi(k/n)\rr|dm(x)\nonumber\\
\leq& \sum_{k=-N}^N \int_{I_{n,k}}\psi_{n,k}(x)\cdot\ll|x - \frac{k}{n}\rr|dm(x).\label{controlmmn}
\end{align}

On one hand, for all $k\in\llbracket -N+1,N-1\rrbracket$,
$$\int_{I_{n,k}}\psi_{n,k}(x)\cdot\ll|x - \frac{k}{n}\rr|dm(x) \leq \frac1n\int_{I_{n,k}}\psi_{n,k}(x)dm(x) = \frac1n\int_{\r}\psi_{n,k}(x)dm(x),$$
whence
\begin{equation}\label{controlmmn1}
\sum_{k=-N+1}^{N-1}\int_{I_{n,k}}\psi_{n,k}(x)\cdot\ll|x - \frac{k}{n}\rr|dm(x) \leq \frac1n \int_\r\sum_{k={-N+1}}^{N-1}\psi_{n,k}(x)dm(x)\leq \frac1n.
\end{equation}

On the other hand,
\begin{align}
\int_{I_{n,N}} \psi_{n,N}(x)\ll|x-\frac{N}{n}\rr|dm(x) \leq& \int_{](N-1)/n,+\infty[}\ll|x-\frac{N}{n}\rr|dm(x)\leq \frac1n + \int_{]N/n,+\infty[}\ll(x-\frac{N}{n}\rr)dm(x)=\frac1n.\label{controlmn2}%\\
%\leq& \frac1n + \int_{]N/n,+\infty[} x~dm(x) \leq \frac1n + \int_{[n,+\infty[} x~dm(x).
\end{align}
And, with the same calculation,
\begin{equation}\label{controlmn3}
\int_{I_{n,-N}} \psi_{n,-N}(x)\ll|x-\frac{-N}{n}\rr|dm(x) \leq\frac1n.
\end{equation}

%This implies that
%\begin{equation}\label{controlmmn2}
%\sum_{k\in\{-N,N\}}\int_{I_{n,k}}\psi_{n,k}(x)\cdot\ll|x - \frac{k}{n}\rr|dm(x) \leq \frac2n + \int_{]-\infty,-n]\cup[n,+\infty[}|x|dm(x).
%\end{equation}

Then, using inequalities~\eqref{controlmmn1},~\eqref{controlmn2} and~\eqref{controlmn3} to control~\eqref{controlmmn}:
$$\ll|\int_\r \phi(x)d(m-m^{[n]})(x)\rr|\leq \frac3n.$$

As the inequality above holds true for any Lipschitz continuous function~$\phi$ with Lipschitz constant non-greater than one, the result follows from Kantorovich-Rubinstein duality.

\end{appendix}

\bibliography{biblio}

\end{document}